\documentclass[12pt]{article}

\usepackage[margin=1in]{geometry}
\usepackage[backend=biber]{biblatex}
\nocite{*}

\usepackage{color}
\usepackage{amsfonts, amsmath, amsthm, amssymb}
\usepackage{latexsym, amsxtra, amscd, ifthen}
\usepackage{dsfont}
\usepackage{verbatim}
\usepackage{url}
\usepackage{titlesec}
\usepackage{graphicx}
\usepackage{tikz, pgfplots}
\usepackage{tikz-cd, pst-node}
\pgfplotsset{compat=1.18}
\usetikzlibrary{positioning, arrows.meta}

\tikzset{
    STEPBOX/.style={
    rectangle, 
    draw=black, 
    fill=white, 
    very thick, 
    minimum height=7mm,
    minimum width=40mm,
    }
}

\graphicspath{ {./images/} }

\titleformat{\section}[block] 
  {\normalfont\Large\filcenter} 
  {\thesection.}{0.5em}{}          

\titleformat{\subsection}[runin]
    {\normalfont\bfseries}
    {\thesubsection.}{0.5em}{}
    [\newline]

\newtheorem{theorem}{Theorem}
\newtheorem{lemma}[theorem]{Lemma}
\newtheorem{proposition}[theorem]{Proposition}

\newtheorem{corollary}[theorem]{Corollary}

\newtheorem{definition}[theorem]{Definition}
\newtheorem{notation}[theorem]{Notation}
\newtheorem{example}[theorem]{Example}
\newtheorem{remark}[theorem]{Remark}

\numberwithin{theorem}{section}
\numberwithin{equation}{theorem}

\newcommand{\kk}{\mathds{k}} 

\title{Foundations and Classification of Invariant Subalgebras of Grassmann Algebra}
\author{Mithat Konuralp Demir\\
\small Mentor: Zahra Nazemian}
\date{August 2025}

\begin{document}

\maketitle

\begin{abstract}
This paper is a documentation of author's reseach, focusing on the topic Grassmann Algebra spanning over July, August 2025 under mentorship provided by DRP Türkiye 2025.

Grassmann algebra is a fundamental structure in mathematics with wide-ranging applications across multiple areas of mathematics and physics. Most notably, it serves as the foundation for differential geometry, by constituting the natural setting which differential forms reside. This paper begins with presenting the defining properties of Grassmann Algebra, outlining the working principles of the key mechanism of the algebra, wedge product. Following that, we give an exposition of formal construction of Grassmann algebra from free associative algebra with the goal of emphasizing how these properties are imposed in the structure of the algebra. The intrinsic relationship between the exterior product and the determinant is explored in Section 4. Finally, we investigate invariant subalgebras, one of the primary focuses of this paper. Here, we present a novel classification of invariant subalgebras. 
\end{abstract}

\tableofcontents


\section{Introduction}

\hspace*{\parindent} \emph{Grassmann algebra} is a major discovery in mathematics of the 19th century, conceived by German mathematician and gymnasium teacher Hermann Grassmann. It is referred to, synonymously, as \emph{Exterior algebra}. Grassmann presents this algebra for the very first time in his prominent work \emph{Ausdehnungslehre} dating back to 1844, which is \emph{Expansion Theory} in English. The name Exterior algebra originates from the geometric interpretation of the algebra: an element of the algebra may be \emph{“extended”} to form a higher order element by its (exterior) product with another element, which will become clearer in Section 2. We shall refer to the algebra as Grassmann algebra in this introduction, which is a brief discussion on the roots and history of the algebra. In contrast, in the following parts of the text, we adopt the name \emph{Exterior algebra}, which appears to be more suitable, provided we focus on the internal algebraic structure of this algebra and its intrinsic connection with geometry.

This paper reports the author’s introductory research on Grassmann algebra. In Section 2, we introduce the central machinery of the algebra, the \emph{wedge product}, and exemplify its applications in Euclidean geometry. In Section 3, we construct the algebra from free associative (non-commutative) algebra step by step and compare it to the similar construction of polynomial rings, with the goal of disclosing how the most important inherent property of the algebra, anti-commutativity (or anti-symmetry) is implemented into it. Note that these rings are algebras over free associative algebra on $n$-variables. In Section 4, we discuss the fundamental connection between Exterior algebra and the determinant. Final section explores with much more rigor some algebraic properties of the algebra, mainly the automorphisms of the algebra and subalgebras invariant under them.

Grassmann algebra is one of those curious cases in mathematics that received the recognition and appreciation it is worthy of, long after its conception. Hermann Grassmann spent a considerable amount of effort to promote his ideas, and although application of his algebra to mechanics was known by the 19th century, it did not spark much interest in the mathematical community. John Browne, in his comprehensive work \cite{browne}, comments on this strange phenomenon by arguing that “the mathematics at that time was not ready for the new ideas Grassmann proposed, and now in the twenty-first century, seems only just becoming aware of their potential”. 

In the late 19th century, there were major advancements regarding Grassmann algebra. One of them is Peano’s reformulation of Grassmann’s ideas in a more modern and axiomatic way which he was famous for. He is one of the few who recognized the depth of Grassmann’s work and his reformulation made Grassmann’s theory more mainstream and accessible to other mathematicians. If it were not for Peano, the theory might have remained longer in obscurity. Another great progress is Cartan’s application of exterior algebra to differential geometry. By defining the exterior derivative $d$, thereby equipping the algebra of differential forms with a calculus, he uncovered much of the potential of Grassmann algebra. This revolutionary step rendered the algebra indispensable for geometry. 

Today, the prominence of this algebra visibly spans a wide range of scientific fields concerned with geometry, most notably mathematics and physics. Therefore, it appears that more mathematicians will undertake the effort to study and further develop Grassmann algebra, and it will receive greater celebration in the mathematical community.

\subsection{Preliminaries}
We start by outlining some preliminary definitons.
For our purposes, we consider vector spaces and algebras defined over only algebraically closed field of characteristic zero. In this case, let $\kk$ be such a field.
\begin{definition}
    \textbf{Binary operation} $*$ on a non-empty set S is a map defined as $ * : S \times S \to S$ 
\end{definition}
\begin{definition}
    Let $S$ be a set and $* : S \times S \to S$ be a binary operation. We say this operation is \textbf{associative} if $a*(b*c) = (a*b)*c$, and \textbf{commutative} if $a*b = b*a$, for all $a, b, c \in S$.
\end{definition}
\begin{definition}
    A \textbf{group} is a tuple $(S, *)$, where $S$ is a set and $*$ is a binary product on $S$ which satisfies the following 4 axioms:
    \begin{itemize}
        \item Closure: $*$ is closed on $S$
        \item Associativity: $*$ is associative
        \item Identity: an identity element $id$ exists
        \item Inverses: for every element $a \in S$, there exists an inverse $b \in S$ such that $ab = ba = id$
    \end{itemize}
\end{definition}
\begin{definition}
    \textbf{Ring} is a triple $(R, +, \cdot)$ such that $R$ is a set equipped with two operations, namely addition $+$ and multiplication $\cdot$, that is an abelian(commutative) group under addition $+$, multiplication $\cdot$ is associative and distributes over addition.
\end{definition}
\begin{definition}
    Let R be a ring with multiplication $\cdot$. A \textbf{two-sided} ideal $I \subseteq R$ is a subset that is an additive subgroup of the group $(R, +)$ that satisfies for any $r \in R$ and $x \in I$, $r \cdot x \in I$ and $x \cdot r \in I$
\end{definition}
\begin{definition}
    $\kk$\textbf{-vector space} is the triple $(V, \kk, +, \cdot)$ where $V$ is a set equipped with compatible binary operations $+$, $\cdot$ that are called \textbf{vector addition}, \textbf{scalar multiplication}. It is an abelian group under addition $+$, and multiplication $\cdot$ satisfies, for all $u, v \in V$ and $a, b \in \kk$, $a(u+v) = au + av,\quad (a+b)u = au + bu,\quad (ab)u = a(bu),\quad 1u = u$. 
\end{definition}
\begin{definition}
    Let $V, W, U$ be vector spaces over the same field $\kk$. A map $\varphi : V \times W \to U$ is \textbf{bilinear} if for all $v, v' \in V$ and $w, w' \in W$ and $a \in \kk$ we have;
    \begin{gather*}
        (av) \cdot w = a(v \cdot w)\\
        v \cdot (aw) = a(v \cdot w)\\
        v \cdot (w + w') = v \cdot w + v \cdot w'\\
        (v + v') \cdot w = v \cdot w + v' \cdot w
    \end{gather*}
\end{definition}
\begin{definition}
    $\kk$\textbf{-algebra} $A$ is a vector space over $\kk$ equipped with a bilinear multiplication operation $\cdot : A \times A \to A$ that is closed under multiplication.
\end{definition}
\begin{definition}
    Let $\cdot : V \times V \to W$ be bilinear map. We say this operation is:
    \begin{itemize}
        \item \textbf{Alternating} (nilpotent), if $v \cdot v = 0$
        \item \textbf{Anti-commutative} (anti-symmetric), if $u \cdot v = -v \cdot u$
    \end{itemize}
    holds for all $u, v \in V$
\end{definition}
\begin{remark}\label{rmk:altanti}
    For $char(\kk) \neq 2$, we have equivalence of alternating and anti-commutativity properties for a bilinear map $\cdot$. Specifically, $\cdot$ is alternating, then $0 = (v + w)(v + w) = v^2 + vw + wv + w^2 = vw + wv$, thus anti-commutativity follows. It is easier to see how alternating property follows from anti-commutativity. 

    On the other hand, in $char(\kk) = 2$, alternating still implies anti-commutativity. However, anti-commutativity is equivalent to commutativity as $vw = -wv = wv$ ($-1 = 1$ in $char = 2$) and we can have $v^2 \neq 0$ but still $vw = -wv$.

    Thus, even though adjectives alternating and anti-commutativity are used interchangeably, they are identical only conditionally, and must be used with care.
\end{remark}

\begin{definition}\label{grading}
    When $G$ is a group,  a $\kk$-algebra $A$ is called \textbf{ $G$-graded} if it can be decomposed into a direct sum of $\kk$-vector subspaces, such as:
    $$A = \bigoplus_{i \in G} A_i$$
    where $i$ runs over $G$ and  
    $$A_g \cdot A_h \subseteq A_{gh} \quad \forall g, h \in G $$
    Whenever $A$ is $ \Bbb{Z}$-graded and $ A_i = 0$, for every $ i < 0$, we simply call $A$  a  \textbf{graded algebra}.  
\end{definition}
\begin{definition}
    A \textbf{connected graded algebra} is a graded algebra defined as in \ref{grading} where $A_0 = \kk$
\end{definition}
\begin{definition}
    \textbf{General Linear group} over a vector space $V$, or \textbf{$GL(V)$} is defined to be the group of all non-singular matrices over V under matrix multiplication. In other words, it is the group of all invertible linear transformations from $V$ to itself, under composition of maps.
\end{definition}
\begin{definition}
    Let $V$ be a vector space over field $\kk$. A map $\varphi : V^n \to \kk$ is said to be \textbf{multilinear} if for the $i^{th}$ argument (where $1 \le i \le n$), following holds: 
    $$
        \varphi(\cdots, av + bw, \cdots) = a\;\varphi(\cdots, v, \cdots) + b\;\varphi(\cdots, w, \cdots)
    $$
    for arbitrary $v, w \in V$ and $a, b \in \kk$
\end{definition}

\pagebreak
\begin{definition}
    The \textbf{free associative algebra} $R\langle X \rangle$ on a set $X$ over a commutative ring $R$ is the non-commutative, associative, unital $R$-algebra consisting of all finite $R$-linear combinations of formal words in $X$. Multiplication is defined by concatenation (writing symbols adjacently), and extended linearity: it distributes over addition and scalar multiplication.
\end{definition}
\begin{remark}\label{free}
    An algebra is $\textbf{free}$ if it has no relations other than the defining ones of its type (like associativity, unit, commutativity, e.t.c.) 
    The adjective "free" in this context means that the algebra has no relations other than the defining ones of its type (like associativity, unit, commutativity, e.t.c.). For example a \emph{free associative algebra} has no imposed relations expect associativity and unit, while a \emph{free commutative algebra} has no relations except associativity, commutativity and unit. In fact, the notion of \emph{free} depends on the category(type of algebra) we are considering.
    (That is why we have \emph{freeness} for an associative object, also a different \emph{freeness} for a commutative object). The origins of this notion lie in category theory, where a free object is characterized by a universal property. Treatment of category theory falls outside the scope of this paper; therefore we will not go deep into it, rather be content with an algebraic version of it. An interested reader may wish to investigate this subject in the context of category theory.
\end{remark}
\begin{remark}
    The multiplication is $R$-bilinear:
    for all $a, b \in R, \; f, g, h \in R\langle X \rangle$, we have
    \begin{gather*}
        (af + bg)h = a(fh) + b(gh)\\
        h(af + bg) = a(hf) + b(hg)
    \end{gather*}
\end{remark}

\pagebreak

\section{Exterior Algebra}

Exterior algebra is a powerful mathematical system for describing the physical world by providing an algebraic framework for vectors and tensors. One of its greatest strengths is the ability to algebraically encode linear dependence of entities. The source of this strength is the built-in product operation of the algebra, namely the \emph{wedge product}. 

\begin{notation}
    Wedge product is synonymously called \emph{exterior product}. In both cases, it is denoted with the wedge symbol $\wedge$
\end{notation}

\subsection{Definition of Exterior Algebra}
Before giving the definition and properties of the wedge product, we introduce the concept of \emph{multivector}. The exterior algebra, $\bigwedge^*(V)$ is an algebra over a vector space $V$ with the wedge product assigned as the multiplication operation.

For the following discussion, we suppose $V$ is a $n$-dimensional vector space with a fixed ordered basis $B = \{ e_1, \cdots, e_n \}$.

\begin{notation}
    Throughout the text we denote the exterior algebra over an arbitrary vector space $\bigwedge^*(V)$ with letter $A$.
    In this convention, $A_k$ denotes the subspace:
    $$A_k := \bigwedge^kV = \underbrace{V \wedge V \wedge \cdots \wedge V}_{k \text{ many}}$$
\end{notation}
We define the set of direct sum of subspaces of $A$ of even grade elements, and of odd grade elements as follows: 
\begin{definition}
    \begin{align*}
        A_{even} &= \bigoplus_{i = 0}^{\left\lfloor \frac{n}{2} \right\rfloor} A_{2i}\\
        A_{odd} &= \bigoplus_{i = 1}^{\left\lceil \frac{n}{2} \right\rceil} A_{2i+1}
    \end{align*}
\end{definition}
\noindent
We know a direct sum of subspaces is a subspace. Consequently, $A_{even}$ and $A_{odd}$ are subspaces of A. 

There are two natural gradings of exterior algebra. One of them is the $\mathbb{Z}_2$ grading:
$$A = A_{even} \oplus A_{odd}$$

The other grading is the $\mathbb{N}$ grading:
$$
A = \bigoplus_{i = 1}^n \bigwedge^i A_i
$$
where $A_0 = \kk, A_1 = V$. In perspective of $\mathbb{N}$ grading, exterior algebra is a connected graded algebra. 

\subsection{Wedge Product}
Of course, the wedge product is the main machinery that gives the algebra its function. We list the definition and main properties of it below:

\begin{definition}
    Let $V$ be a vector space over a field $\kk$ The \textbf{wedge product} is an associative, bilinear, alternating operation 
    $$\wedge : A_k \times A_l \to A_{k+l}$$
\end{definition}

\begin{remark}
    The \emph{alternating} property translates to \textbf{linear dependence}, which will be more apparent in an example with an element of higher grade.
\end{remark}
\begin{remark}
    Recall that, by \ref{rmk:altanti} the adjective \emph{alternating} in the definition implies \emph{anti-commutativity} for $char(\kk) \neq 2$, such that 
    $$x \wedge y = -y \wedge x, \quad \forall x, y \in V$$
    We will see that anti-commutativity is vital in encoding orientation of objects in geometry.
\end{remark}

Before proceeding further, we need a complete description of subspace $A_k$. 
\begin{definition}
    Any element of $A_k$ is called an element or \textbf{multivector of grade $k$},\\ \textbf{$k$-vector} or \textbf{$k$-element.} Specially, the term \emph{vector} is reserved for elements of $A_1$ which are 1-elements.
\end{definition}
Let $x, y$ be elements of $A_1 = V$. The $x \wedge y$ does not belong to $V$. Put differently, it is not a vector, but instead an instance of a multivector. In particular, a \emph{bivector} for it has two factors with respect to the wedge product. Simultaneously, according to the naming convention introduced above, it is an element of grade $2$. It belongs to the subspace $A_2$.

To help us in transcription, we introduce multi-index notation.

\begin{notation}
    An ordered \textbf{multi-index} (or simply multi-index) $I$ of grade k, or $k$-multi-index in $\mathbb{N}^n$ is an $n$-tuple of non-negative integers such that
    $$
    \begin{gathered}
    I = (i_1, i_2, \cdots , i_n) \\    
    \text{where } i_j \in \mathbb{N} \text{ with ordering } 1 \le i_1 < i_2 < \cdots <i_k \le n.
    \end{gathered}
    $$
    We explicitly state \emph{unordered}, when entries $i_j$ of multi-index $I$ are unordered.
\end{notation}
\begin{example}
    Say we have a set of vectors $S = \{v_1, v_2, \cdots, v_n\}$ and a $k$-multi-index $I = (i_1, i_2, \cdots, i_k)$ 
    In this context, wedge product of vectors in S, indexed by $I$ is 
    $$v_I = v_{i_1} \wedge v_{i_2} \wedge \cdots \wedge v_{i_k}$$
\end{example}

\begin{definition} 
Let $c \in \kk$, and let $x = e_1 \wedge \cdots \wedge e_k$ where $e_i \in B$. Then $c\,x$ is called an \textbf{irreducible element} of grade $k$.
\end{definition}

\begin{proposition}\label{prop:nil}
    Let $x = v_1 \wedge \cdots \wedge v_k$, where $v_i \in V$. If any vector appears more than once (up to scalar multiple), then $x = 0$
\end{proposition}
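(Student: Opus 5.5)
The plan is to reduce to the case where the repeated vectors sit next to each other, and then apply the alternating property directly. Suppose $v_i$ and $v_j$ with $i<j$ satisfy $v_j = c\,v_i$ for some $c \in \kk$. The case $v_i = c\,v_j$ is symmetric, and if $v_i = 0$ then $x=0$ at once by multilinearity.

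First, I will use bilinearity of $\wedge$ (together with associativity, which lets us regroup the factors freely) to pull the scalar out:
$$x = c\,(v_1 \wedge \cdots \wedge v_i \wedge \cdots \wedge v_i \wedge \cdots \wedge v_k).$$
So it suffices to treat the case of literally equal factors.

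Next, I will move the second copy of $v_i$ leftward, one position at a time, until it is adjacent to the first copy. Each step swaps two neighbouring vectors $u \wedge w$. By anti-commutativity on $V$ (Remark \ref{rmk:altanti}, valid since $\mathrm{char}(\kk)=0$), each swap only introduces a factor of $-1$. Associativity lets me isolate the pair being swapped, and bilinearity carries the sign outside. After $j-i-1$ swaps we obtain
$$x = (-1)^{j-i-1} c\; v_1 \wedge \cdots \wedge (v_i \wedge v_i) \wedge \cdots.$$
The alternating property gives $v_i \wedge v_i = 0$. Bilinearity then gives $y \wedge 0 \wedge z = 0$ for any multivectors $y,z$, so $x = 0$.

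The only step needing care is this swapping. One must justify that a local swap inside a longer product can be done while leaving the other factors fixed. This is where associativity is used: write the product as $(\text{left part}) \wedge (u \wedge w) \wedge (\text{right part})$. It must also be checked that the sign bookkeeping is right, although the exact sign is irrelevant since the result is $0$. A cleaner alternative is an induction on $j-i$. The base case $j=i+1$ is immediate from the alternating property. For the inductive step, swap $v_{j-1}$ and $v_j$, which introduces a sign and reduces the gap between the repeated vectors by one.
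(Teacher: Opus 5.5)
Your proof is correct and follows essentially the same route as the paper: use anti-commutativity to bring the two proportional factors next to each other, isolate the pair with associativity, and kill the product with the alternating property. Pulling the scalar out by bilinearity, so that $v_i \wedge c\,v_i = c\,(v_i \wedge v_i) = 0$, makes explicit a step the paper's proof leaves implicit.
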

\begin{proof}
    Suppose, in the product, $\frac{1}{a}\;v_i = \frac{1}{b}\;v_j$ for some $a, b \in \kk$ and $i \neq j$. Reorder using anti-commutativity to bring $v_i$ next to $v_j$. Then, isolate the two vectors with associativity. From alternating property, $v_i \wedge v_j = 0$. Hence, $x = 0$.
\end{proof}

\begin{proposition}\label{prop:basis}
    Let $B_k = \{e_I = e_{i_1} \wedge \cdots \wedge e_{i_k} \mid \forall \;k\text{-multi-indices } I\}$. Here, the set of possible $k$-multi-indices $I$ is equivalent to all irreducible elements of grade $k$, and $I$ runs over all strictly increasing $k$-tuples of indices from $\{1, \cdots, n\}$.
    $B_k$ is a basis for the exterior $k^{th}$ exterior power, the subspace $A_k$. Therefore, $$dim(A_k) = \binom{n}{k}$$ 
\end{proposition}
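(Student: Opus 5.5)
We prove the two halves of a basis separately: first that $B_k$ spans $A_k$, then that $B_k$ is linearly independent. The dimension formula then follows by counting strictly increasing $k$-tuples in $\{1,\dots,n\}$, which are the same as $k$-element subsets, so there are $\binom{n}{k}$ of them.

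\emph{Spanning.} By definition $A_k$ is spanned by products $v_1\wedge\cdots\wedge v_k$ with $v_i\in V$. We expand each $v_i=\sum_j a_{ij}e_j$. Multilinearity of $\wedge$ (bilinearity together with associativity) writes the product as a linear combination of words $e_{j_1}\wedge\cdots\wedge e_{j_k}$ over arbitrary, unordered tuples $(j_1,\dots,j_k)$.
\begin{itemize}
\item If some index repeats, the word vanishes by Proposition~\ref{prop:nil}.
\item Otherwise we sort the indices increasingly by a sequence of adjacent transpositions. Each transposition multiplies the word by $-1$, by anti-commutativity (Remark~\ref{rmk:altanti}, since $\mathrm{char}\,\kk=0$). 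The word therefore equals $\pm e_I$ for the sorted multi-index $I$.
\end{itemize}
Hence every element of $A_k$ is a combination of elements of $B_k$.

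\emph{Linear independence.} This is the main obstacle, because the axiomatic description of $\wedge$ given so far does not by itself rule out further relations among the $e_I$. The plan is to construct, for each $k$-multi-index $J$, a linear functional $\varphi_J:A_k\to\kk$ with $\varphi_J(e_I)=\delta_{IJ}$. Then a relation $\sum_I c_Ie_I=0$, after applying $\varphi_J$, gives $c_J=0$ for every $J$.

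To build $\varphi_J$, we first define a map on $V^k$ by
\[
f_J(v_1,\dots,v_k)=\det\bigl(a_{j_r,s}\bigr)_{r,s=1}^{k},
\]
where $v_s=\sum_i a_{is}e_i$. This is the $k\times k$ minor of the coordinate matrix of $(v_1,\dots,v_k)$ using the rows indexed by $J$. The map $f_J$ is multilinear and alternating, since a determinant with two equal columns vanishes. So, by the universal property of $A_k$ (equivalently, its construction as a quotient of the tensor power by the ideal generated by $v\otimes v$, which the paper carries out in Section~3), $f_J$ factors through a linear map $\varphi_J:A_k\to\kk$. On a basis word, $\varphi_J(e_I)$ is the minor of the matrix whose columns are $e_{i_1},\dots,e_{i_k}$, taken with rows $J$. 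This minor is the identity determinant $1$ when $I=J$, and it has a zero row when $I\neq J$, so it is $0$.

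If one prefers not to appeal to the universal property at this point, an alternative is to realize $\bigwedge V$ concretely. Take the vector space with formal basis the subsets of $\{1,\dots,n\}$, and define the product $e_S\cdot e_T$ to be $0$ if $S\cap T\neq\emptyset$, and $\mathrm{sgn}\cdot e_{S\cup T}$ otherwise, where the sign comes from the sorting permutation. One then checks that this product is associative and alternating on $V$. Independence of the $e_I$ then holds by construction. Either route makes the independence step rigorous, and together with spanning it shows that $B_k$ is a basis and $\dim A_k=\binom{n}{k}$.
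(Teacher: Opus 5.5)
Your spanning half is the paper's proof. The paper expands by multilinearity, discards repeated indices via Proposition~\ref{prop:nil}, and rewrites the result as a combination of the $e_I$, and it stops there. So the paper only shows that $B_k$ spans $A_k$, i.e.\ $\dim A_k\le\binom{n}{k}$. Linear independence is never addressed, even though the dimension formula needs it, and so does the later claim in Section~4 that $\bigwedge^n V$ is $1$-dimensional rather than $0$. Your second half supplies exactly this. You are right that it cannot come from the properties listed in Section~2 alone: an algebra $\kk\oplus V$ in which all products of vectors are declared zero satisfies them, with $A_2=0$. So appealing to the quotient construction is necessary. The minor functionals are the standard argument, and your computation $\varphi_J(e_I)=\delta_{IJ}$ is correct. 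For $I\neq J$ some $j_r\notin I$ gives a zero row.

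Two small precisions. First, the factorization you use is the universal property of the graded piece $\bigwedge^k V$ for alternating multilinear maps $V^k\to\kk$. It is not the algebra-level property stated in Section~4, which gives nothing with target $\kk$, since $f(v)^2=0$ forces $f=0$ in a field. Your property follows from the construction because the ideal generated by the $v\otimes v$ is homogeneous. Its degree-$k$ part is therefore spanned by tensors $a\otimes v\otimes v\otimes b$ with $a\in V^{\otimes p}$, $b\in V^{\otimes q}$, $p+q+2=k$, and the linear map $V^{\otimes k}\to\kk$ induced by $f_J$ vanishes on these.

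Second, in your concrete-model alternative, independence holds in the model by construction. To transfer it to $A=T(V)/\langle v\otimes v\rangle$ you still need the algebra map from $A$ to the model with $e_i\mapsto e_{\{i\}}$. That map comes from freeness of $T(V)$, since $v\cdot v=0$ in the model, unless you simply adopt the model as the definition of $A$. So this route does not entirely avoid a universal-property step.
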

\begin{proof}
    Proof follows easily from the definition $A_k := \bigwedge^kV$. For any element $v \in A_k$, $v$ is a linear combination irreducible elements of grade $k$. We require distinct vectors due to \ref{prop:nil}. Distributing wedge product over linear combinations, we obtain a linear combination of elements in $B_k$
\end{proof}
This proposition also establishes the dimension of the exterior algebra $A$:
\begin{proposition}
    $dim(A) = 2^n$, where $dim(V) = n$ 
\end{proposition}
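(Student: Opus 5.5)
The plan is to read off the dimension of $A$ from the $\mathbb{N}$-grading together with Proposition \ref{prop:basis}, and then sum binomial coefficients. First I will record that $A = \bigwedge^*(V)$ splits as the direct sum
$$A = \bigoplus_{k=0}^{n} A_k,$$
where $A_0 = \kk$ and $A_1 = V$. The sum may be truncated at $k = n$ because $A_k = 0$ for $k > n$. To see this, note that any product $e_{i_1} \wedge \cdots \wedge e_{i_k}$ with $k > n$ must repeat some basis vector, by the pigeonhole principle. Such a product vanishes by Proposition \ref{prop:nil}. By multilinearity, every element of $A_k$ is a linear combination of these products, so $A_k = 0$.

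Since the sum is direct, the dimension of $A$ is the sum of the dimensions of the summands. The union $\bigcup_{k=0}^n B_k$ of the bases from Proposition \ref{prop:basis} is then a basis of $A$. Here $B_0 = \{1\}$, which matches $\binom{n}{0} = 1$ and the connectedness $A_0 = \kk$. This gives
$$\dim(A) = \sum_{k=0}^{n} \dim(A_k) = \sum_{k=0}^{n} \binom{n}{k}.$$

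It remains to evaluate the sum, which equals $2^n$ by the binomial theorem applied to $(1+1)^n$. Alternatively, one can give a combinatorial argument. Strictly increasing multi-indices of all grades correspond bijectively to subsets of $\{1, \dots, n\}$: each basis element $e_I$ is determined by the set of indices it contains. There are exactly $2^n$ such subsets.

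The main obstacle is not this count but the input it relies on. Proposition \ref{prop:basis} needs the elements of $B_k$ to be linearly independent, and that proposition's proof only establishes that they span. If I want the argument to be self-contained, I would justify independence by exhibiting, for each $k$-multi-index $I$, a linear functional on $A_k$ that sends $e_I$ to $1$ and every other $e_J$ to $0$. The natural choice is the $k\times k$ minor determinant on the columns indexed by $I$. This is an alternating multilinear map, so it factors through $\bigwedge^k V$. For the purposes of this proposition, however, I will simply take Proposition \ref{prop:basis} as given.
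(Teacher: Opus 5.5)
Your proof is correct and follows the same route as the paper: decompose $A$ by grade, use Proposition \ref{prop:basis} to get $\dim A_k = \binom{n}{k}$, and sum the binomial coefficients. Two points in your favour. Your sum correctly runs from $k=0$, whereas the paper's displayed sum starts at $i=1$; that omits $A_0 = \kk$ and, taken literally, gives $2^n - 1$. You are also right that Proposition \ref{prop:basis} only proves spanning, and your minor-determinant functionals are a sound way to supply the missing linear independence.
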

\begin{proof}
    $$dim(A) = dim\left(\sum_{i=1}^nA_i\right) = \sum_{i=1}^n dim(A_i) = \sum_{i=1}^n \binom{n}{i} = 2^n$$
\end{proof}

\begin{example}
    Let $dim(V) = 4$ and $B = \{e_1, e_2, e_3, e_4\}$ be a basis for $V$. Then, the followings constitute a basis for the corresponding subspaces:
    \begin{itemize}
        \item $B_1 = B$, for $A_1$ ($dim(A_1) = 4$)
        \item $B_2 = \{e_1 \wedge e_2, e_1 \wedge e_3, e_1 \wedge e_4, e_2 \wedge e_3, e_2 \wedge e_4, e_3 \wedge e_4\}$, for $A_2$ ($dim(A_2) = 6$)
        \item $B_3 = \{e_1 \wedge e_2 \wedge e_3, e_1 \wedge e_2 \wedge e_4, e_1 \wedge e_3 \wedge e_4, e_2 \wedge e_3 \wedge e_4\}$, for $A_3$ ($dim(A_3) = 4$)
        \item $B_4 = \{e_1 \wedge e_2 \wedge e_3 \wedge e_4\}$, for $A_4$ ($dim(A_4) = 1$)

    \end{itemize}
\end{example}

With multi-index notation at our disposal to describe elements of higher grades, we wish to demonstrate how the properties of the wedge product extend multivectors of arbitrary grades.

\begin{proposition}
    Let $v$ be a wedge product of vectors in $V$, such that $$v = \cdots \wedge x \wedge \cdots \wedge y \wedge \cdots \quad \text{ where } x, y \in V$$ By interchanging the order of vector factors $x$ and $y$ we change the sign of the product $v$, which is $$v' = -v = \cdots \wedge y \wedge \cdots \wedge x \wedge \cdots$$
\end{proposition}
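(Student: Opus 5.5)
The plan is to reduce the swap of two arbitrary, possibly non-adjacent, factors to a sequence of adjacent transpositions. Each adjacent transposition changes the sign by anti-commutativity. This is available from Remark \ref{rmk:altanti}, since $char(\kk) = 0 \neq 2$ and the wedge product is alternating. Concretely, I write
$$v = P \wedge x \wedge w_1 \wedge \cdots \wedge w_m \wedge y \wedge Q,$$
where $P$ and $Q$ are (possibly empty) wedge products of vectors, and $w_1, \dots, w_m \in V$ are the $m \ge 0$ factors strictly between $x$ and $y$.

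First I isolate the basic step. By associativity I regroup so that a chosen adjacent pair $u \wedge u'$ of vectors stands as a single block. Anti-commutativity gives $u \wedge u' = -u' \wedge u$. Bilinearity then lets the scalar $-1$ be pulled out of the whole product. So swapping any two adjacent vector factors multiplies the entire product by $-1$.

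Next I count transpositions. I move $y$ leftward past $w_m, \dots, w_1$ and then past $x$; that is $m+1$ adjacent swaps, giving
$$P \wedge y \wedge x \wedge w_1 \wedge \cdots \wedge w_m \wedge Q.$$
Then I move $x$ rightward past $w_1, \dots, w_m$, which takes $m$ more swaps and yields
$$P \wedge y \wedge w_1 \wedge \cdots \wedge w_m \wedge x \wedge Q = v'.$$
The total is $2m+1$ adjacent transpositions, so $v' = (-1)^{2m+1} v = -v$. This can be phrased as an induction on $m$ if one prefers, with the case $m=0$ being the basic step.

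The only real care needed is in the basic step: it must be justified that a local sign change inside a longer product propagates to the whole product. I would handle this explicitly with associativity, writing $v = P \wedge (u \wedge u') \wedge Q$, together with bilinearity of $\wedge$ to pull out $-1$. This also covers the edge cases where $P$ or $Q$ is empty. The degenerate case $x = y$ is consistent: there $v = 0$ by Proposition \ref{prop:nil}, so $-v = v$.
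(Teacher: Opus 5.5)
Your proof is correct and follows the paper's argument exactly: you move $y$ left past the $m$ intermediate factors and past $x$ ($m+1$ adjacent swaps), then move $x$ right into $y$'s old position ($m$ swaps), giving $2m+1$ sign changes in total. Your explicit justification of the basic step, writing $v = P \wedge (u \wedge u') \wedge Q$ and using associativity and bilinearity, is a slightly more careful version of the paper's remark that adjacent factors can be grouped and anti-commuted.
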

\begin{proof}
    By associativity of wedge product, we can pair any two adjacent vector factors. Without loss of generalization, we group $y$ with the element on its left and anti-commute them, flipping the sign of the product one time. Suppose the number of factors between $x$ and $y$ is $m$. We "shift" $y$ to left $m+1$ times, to the adjacent left of $x$. Now we shall "shift" $x$ to the initial position of $y$ and this will result in $m$ changes of sign. In total $2m+1$ sign changes took place, which equals to one sign change. 
\end{proof}

A cautious eye will recognize that we are dealing with permutations here, and in fact, interchanging the order of two vector factors is equivalent to action of an odd permutation, equivalent to $2m+1$ transpositions.

\begin{definition}
    A permutation $\sigma$ is always a composition of $m$ transpositions. If this number $m$ is an odd(even) integer, we call $\sigma$ an odd(even) permutation. 

    Sign of a permutation is defined as $(sgn \;\sigma) \; := (-1)^m = \begin{cases}
        -1 & \sigma \text{ is odd}\\
        +1 & \sigma \text{ is even} 
    \end{cases}$
\end{definition}

\begin{proposition}
    Let $x_I = v_{i_1} \wedge v_{i_2} \wedge \cdots \wedge v_{i_k}$ where $v_{i_j} \in V$. Let $\sigma \in S_k$. The action of $\sigma$ on $x_I$ is given by:
    $$\sigma \cdot x_I = (sgn\;\sigma)\;x_{\sigma(I)} = (sgn\;\sigma)\; v_{i_{\sigma(1)}} \wedge v_{i_{\sigma(2)}} \wedge \cdots \wedge v_{i_{\sigma(k)}}$$
\end{proposition}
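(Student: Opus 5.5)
The plan is to reduce the statement to the previous proposition, which says that interchanging two vector factors changes the sign of the product. The point to establish is that rearranging the factors of $x_I$ according to $\sigma$ yields $(sgn\;\sigma)\,x_I$. Then reading the action $\sigma \cdot x_I$ as ``reorder and record the sign'' gives the identity $v_{i_{\sigma(1)}} \wedge \cdots \wedge v_{i_{\sigma(k)}} = (sgn\;\sigma)\, x_I$. Equivalently, $x_{\sigma(I)} = (sgn\;\sigma)\, x_I$, which is the stated formula since $(sgn\;\sigma)^2 = 1$.

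The main step is an induction on the number of transpositions. By the definition of the sign, write $\sigma = \tau_1 \circ \tau_2 \circ \cdots \circ \tau_m$ as a composition of transpositions, so that $sgn\;\sigma = (-1)^m$.
\begin{itemize}
    \item For $m = 0$, $\sigma$ is the identity and there is nothing to prove.
    \item For the inductive step, assume the permuted product for $\sigma' = \tau_2 \circ \cdots \circ \tau_m$ equals $(-1)^{m-1} x_I$.
    \item Applying the further transposition $\tau_1$ interchanges exactly two vector factors of $v_{i_{\sigma'(1)}} \wedge \cdots \wedge v_{i_{\sigma'(k)}}$. One has to check that precomposing the index map with a transposition swaps two positions.
    \item By the preceding proposition this interchange introduces one more factor of $-1$, giving $(-1)^m x_I = (sgn\;\sigma)\,x_I$.
\end{itemize}
Since the proposition is a statement about wedge products of arbitrary vectors, no basis expansion is needed. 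Bilinearity and associativity are used only through the previous proposition.

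The main obstacle is that the paper defines $sgn\;\sigma$ through ``a'' decomposition into $m$ transpositions. This is only meaningful if the parity of $m$ does not depend on the decomposition. I would first try to avoid the issue by assuming the standard fact that parity is well defined. If a self-contained argument is wanted, the exterior algebra itself can supply it. Take $x_I = e_1 \wedge \cdots \wedge e_k$ with distinct basis vectors, which is nonzero by Proposition \ref{prop:basis}. Two decompositions of the same $\sigma$ produce the same reordered product, equal to $(-1)^m x_I$ and to $(-1)^{m'} x_I$. Because $x_I \neq 0$ and $\mathrm{char}(\kk) = 0 \neq 2$, we get $(-1)^m = (-1)^{m'}$. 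This shows the sign is well defined and completes the argument without circularity.

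The remaining care point is bookkeeping of the composition order. Whether $\tau_1$ acts on positions or on labels affects only which two factors are swapped, not the fact that exactly one swap occurs, so the sign count is unaffected.
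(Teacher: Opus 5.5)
The paper gives no written proof of this proposition. The surrounding text (the interchange proposition, the remark that a swap amounts to $2m+1$ adjacent transpositions, and the definition $sgn\;\sigma := (-1)^m$) points to exactly your argument, and your induction on the number of transpositions is correct. One caution: your optional ``self-contained'' proof that parity is well defined is not really free of circularity. It rests on $e_1\wedge\cdots\wedge e_k\neq 0$, which the paper's proof of Proposition \ref{prop:basis} does not establish (it only shows spanning), and whose standard proofs (e.g.\ via the Leibniz formula) already use that the sign is well defined, so you should simply cite parity as the standard fact.
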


\begin{proposition}\label{prop:parity}
    Suppose $x \in A_k, y \in A_l$ are multivectors of arbitrary grades $k$ and $l$ respectively. Then, $$x \wedge y = (-1)^{|x||y|}y \wedge x$$
    Here, $|x| = k$ denotes the grade of element $x$.
\end{proposition}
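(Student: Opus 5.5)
The plan is to reduce the statement to basis elements, where the sign can be counted directly, and then extend by bilinearity. By Proposition \ref{prop:basis}, every $x \in A_k$ is a linear combination of basis elements $e_I = e_{i_1} \wedge \cdots \wedge e_{i_k}$, and every $y \in A_l$ is a linear combination of basis elements $e_J = e_{j_1} \wedge \cdots \wedge e_{j_l}$. Since the wedge product is bilinear and the factor $(-1)^{kl}$ is the same for every pair $(I,J)$, it suffices to prove
$$e_I \wedge e_J = (-1)^{kl}\, e_J \wedge e_I$$
for all multi-indices $I$ of grade $k$ and $J$ of grade $l$. In fact I will prove the slightly more general identity for arbitrary products $x = u_1 \wedge \cdots \wedge u_k$ and $y = w_1 \wedge \cdots \wedge w_l$ of vectors; this avoids any case distinction on whether $I$ and $J$ overlap.

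The core step is to move the factors of $y$ to the left of the factors of $x$ one at a time, using associativity to regroup and anti-commutativity for vectors ($u \wedge w = -w \wedge u$, Remark \ref{rmk:altanti}, valid since $\operatorname{char}\kk = 0$).
\begin{itemize}
\item First take $w_1$, which sits immediately to the right of $u_k$. Swap it successively with $u_k, u_{k-1}, \dots, u_1$. That is $k$ adjacent transpositions, producing a factor $(-1)^k$ and the product $w_1 \wedge u_1 \wedge \cdots \wedge u_k \wedge w_2 \wedge \cdots \wedge w_l$.
\item Next do the same with $w_2$, moving it left past the $k$ vectors $u_k, \dots, u_1$ until it sits just after $w_1$. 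This gives another factor $(-1)^k$.
\item Inducting on $l$, after all $l$ factors of $y$ have been moved, the total sign is $\bigl((-1)^k\bigr)^l = (-1)^{kl}$. The result is exactly $w_1 \wedge \cdots \wedge w_l \wedge u_1 \wedge \cdots \wedge u_k = y \wedge x$.
\end{itemize}
Equivalently, this is the action of a permutation of the $k+l$ factors with exactly $kl$ inversions, so the preceding proposition on $\sigma \cdot x_I$ gives sign $(-1)^{kl}$. I would mention this as a cross-check.

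The only step needing care is the bookkeeping that each $w_j$ passes exactly $k$ factors and never another $w$. This is immediate from the order in which the $w_j$ are moved, so there is no real obstacle; the argument is an induction on $l$.

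Finally, extend to general $x \in A_k$ and $y \in A_l$ by expanding both in the bases $B_k$ and $B_l$ and applying bilinearity term by term. If the paper's $|x|$ is meant only for homogeneous elements, this completes the proof.
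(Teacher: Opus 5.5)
Your proposal is correct and follows essentially the same route as the paper: reduce by bilinearity to products of $k+l$ vectors, then move each of the $l$ factors of $y$ leftward past the $k$ factors of $x$, for $kl$ adjacent transpositions and hence the sign $(-1)^{kl}$. Your version is somewhat more careful than the paper's, since it states the bilinear reduction explicitly and tracks that each $w_j$ passes exactly the $k$ factors $u_1,\dots,u_k$ and no other $w$.
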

\begin{proof}
    In our setup, for example, element $x$ is a homogeneous sum of elements of grade $k$ (no elements of another grade). This is also the case for y. Take any term of the product of $x$ and $y$. This term consists of $k+l$ vector factors. Shifting of first vector product of $y$ to the left of $x$ corresponds to $k$ transpositions. We apply this operation to all factors of $y$ which amounts to $l$ many times. In the end, we have made a permutation $\sigma$ act on the product $x \wedge y$. This permutation is a composition of $l$ many individual permutations which shift elements left to the $x$, each of which are combination of $k$ transpositions. In total, $\sigma$ is a composition of $k \cdot l$ permutations, which gives $(sgn \;\sigma) = (-1)^{k \cdot l} = (-1)^{|x||y|}$.
\end{proof}

\begin{proposition}\label{prop:multinil}
    Let $x, y$ be arbitrary grade irreducible elements. $$x \wedge y = 0$$ if decompositions of $x$ and $y$ both include a copy of the same basis vector $e_i$.
\end{proposition}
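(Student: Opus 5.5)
The plan is to reduce the statement to Proposition \ref{prop:nil} by writing out the wedge product explicitly. By the definition of an irreducible element, I write $x = c\, e_{i_1} \wedge \cdots \wedge e_{i_k}$ and $y = d\, e_{j_1} \wedge \cdots \wedge e_{j_l}$ with $c, d \in \kk$ and all $e_{i_r}, e_{j_s} \in B$. By hypothesis some basis vector $e_i$ occurs both among the $e_{i_r}$ and among the $e_{j_s}$; fix $r, s$ with $e_{i_r} = e_{j_s} = e_i$.

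Next I use bilinearity to pull the scalars out, and associativity to drop the grouping between the two blocks:
$$x \wedge y = cd\,\bigl(e_{i_1} \wedge \cdots \wedge e_{i_k} \wedge e_{j_1} \wedge \cdots \wedge e_{j_l}\bigr).$$
The right-hand side is now a single wedge product of $k+l$ vectors of $V$ in which $e_i$ appears at least twice, namely at positions $r$ and $k+s$. Proposition \ref{prop:nil} applies directly, with scalar multiples $a = b = 1$, so the bracketed product vanishes. Hence $x \wedge y = 0$.

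If one prefers not to cite Proposition \ref{prop:nil} as a black box, the same conclusion follows by hand. Using the transposition-sign proposition, or Proposition \ref{prop:parity} applied to single vectors, I would move $e_{j_s}$ leftward past the $k-r$ factors after $e_{i_r}$ in $x$ and the $s-1$ factors before it in $y$. Each move multiplies the product by $-1$, so the result differs from the original product only by a sign. After these moves the two copies of $e_i$ are adjacent, and associativity lets me isolate $e_i \wedge e_i$, which is $0$ by the alternating property.

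There is no real obstacle here. The only point needing care is the bookkeeping justifying that a product of two wedge monomials is itself a single monomial in the concatenated list of vectors, which is exactly associativity together with bilinearity for the scalars. The sign produced by the reordering is irrelevant because the final product is zero.
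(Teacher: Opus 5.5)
Your proof is correct and takes essentially the paper's approach: the paper likewise shifts the copy of $e_i$ in $y$ next to the copy in $x$, isolates $e_i \wedge e_i$ by associativity, and concludes by the alternating property, which is exactly your by-hand paragraph. Your first route only differs in citing Proposition \ref{prop:nil} for that argument, after concatenating via associativity and bilinearity, instead of repeating it.
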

\begin{proof}
    The proof is similar to proof of \ref{prop:nil}. In the product $x \wedge y$, shift the copy of $e_i$ in $y$ next to the other copy in $x$. Isolate them with associativity, and from alternating property, this is 0. Hence the product $x \wedge y = 0$
\end{proof}
\begin{remark}
    In particular, from \ref{prop:multinil}, it is obvious that there are no elements of grade greater than $n$, which is the dimension of underlying vector space $V$. Also, if $x \in A_k$ and $y \in A_l$, $x \wedge y = 0$. This is a trivial result of the pigeon-hole principle from combinatorics, which dictates that one vector must be seen at least two times in the decomposition of the product $x \wedge y$
\end{remark}

\subsection{Interpretation on Euclidean Geometry}
One of the fundamental problems in geometry since ancient times until recent past was to relate numbers to geometry. Grassmann wanted to develop an algebra that describes linear (Euclidean) geometry, that extends to higher dimensions. Fearnley-Sanders, in \cite{desmond}, describes what let Grassmann to the theory of Exterior algebra, which models geometry algebraically. He states that in Ausdehnungslehre, Grassmann indicates that his initial motivation arose when wandering on the mechanisms of the formula below, that gives the relationship between lengths induced among three collinear points $A, B, C$:
$$AB + BC = AC$$
Describing:\\
\begin{minipage}{\textwidth}
\centering
\begin{tikzpicture}
    \centering
    \node[]  (a)        {$A$};
    \node[]  (b) [right=of a]    {$B$};
    \node[]  (c) [right=of b]    {$C$};
    
    \draw[-, thick]   (a.east)  to node[left] {}    node[right] {}   (b.west);
    \draw[-, thick]   (b.east)  to node[left] {}   node[right] {}   (c.west);
\end{tikzpicture}
\end{minipage}

However he realized the fascinating fact that the formula holds in any configuration of the three collinear points, if we decide on $BA = -AB$.
\\\\
Now, consider the switched locations of points $B$ and $C$ so $C$ lies in between:\\
\begin{minipage}{\textwidth}
\centering
\begin{tikzpicture}
    \centering
    \node[]  (a)        {$A$};
    \node[]  (b) [right=of a]    {$C$};
    \node[]  (c) [right=of b]    {$B$};
    
    \draw[-, thick]   (a.east)  to node[left] {}    node[right] {}   (b.west);
    \draw[-, thick]   (b.east)  to node[left] {}   node[right] {}   (c.west);
\end{tikzpicture}
\end{minipage}
According to our setting, we have:
$$AB = AC + CB = AC - BC \implies AB + BC = AC $$

Grassmann has spent considerable portion of his life to investigate how this phenomenon manifests in geometry, and eventually concluded that anti-symmetry (anti-commutativity) should be set as the defining property of an algebra that is meant to encode geometry.

In Grassmann's system, a geometric object of every dimension can be expressed algebraically. Since we strip geometric objects of their coordinate, so drop the position data, they simply exist as directions. A vector $x \in A_1 = V$ is geometrically interpreted as representing an independent direction in space (figure \ref{fig:vector}). Consider the vectors ($1$-elements) $x, y, z \in A_1$. If the space has a metric, magnitude of $x$ is interpreted as its length. A bivector $x \wedge y$, which is a linear combination of the basis of $A_2$, represents a planar direction in space (figure \ref{fig:plane}). 
\begin{definition}
\textbf{Codimension} of a $k$-vector $x \in A_k$, where $dim(V) = n$ equals to $n-k$    
\end{definition}
For visual intuition, a $2$-vector in $\mathbb{R}^3$ it can be identified with the direction of a perpendicular normal to it, which is the standard method to identify planes in $\mathbb{R}^3$. 

\begin{figure}[ht]
    \centering
    \begin{minipage}{0.4\textwidth}
        \centering
        \includegraphics[width=0.5\linewidth]{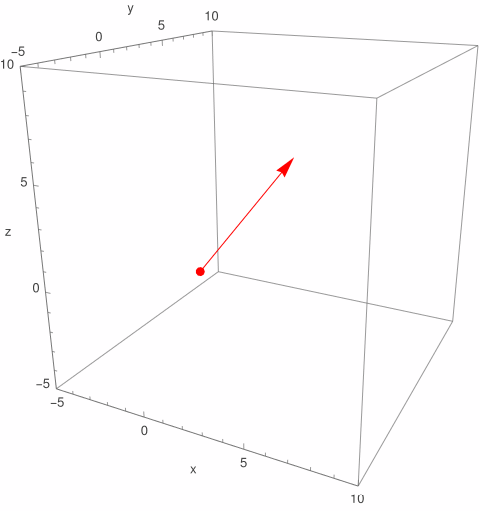}
        \caption{A direction in $\mathbb{R}^3$}
        \label{fig:vector}
    \end{minipage}
    \begin{minipage}{0.4\textwidth}
        \centering
        \includegraphics[width=0.5\linewidth]{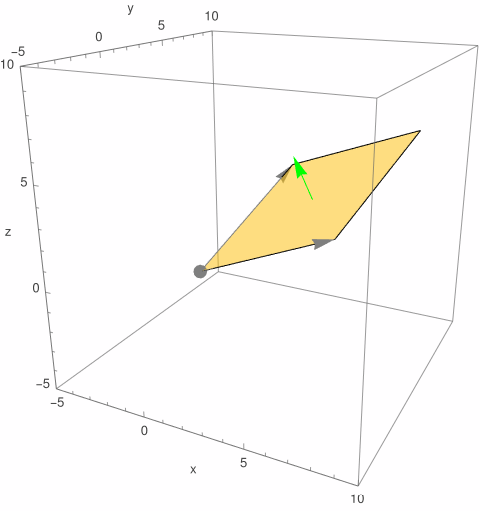}
        \caption{A planar direction in $\mathbb{R}^3$}
        \label{fig:plane}
    \end{minipage}
\end{figure}

In (figure \ref{fig:plane}), the planar dimension is identified with a vector normal to it, but note that we were able to represent it with a vector only because we are in $\mathbb{R}^3$, and \emph{codimension} of a $2$ dimensional plane is $1$ and a \emph{perpendicular space} consists of $1$-vectors. We do not dwell much on complement in exterior algebra, and the notion of codimension here.

In this case, its magnitude is interpreted as area. A trivector $x \wedge y \wedge z$, which is a linear combination of the basis of $A_3$, represents a $3$-dimensional direction in space (figure \ref{fig:plane}) (all trivectors represent the same direction in $\mathbb{R}^3$ ($dim(A_3) = 1$)) and the magnitude of a trivector represents volume. 

\begin{figure}[h]
    \centering
    \begin{minipage}{0.4\textwidth}
        \centering
        \includegraphics[width=0.5\linewidth]{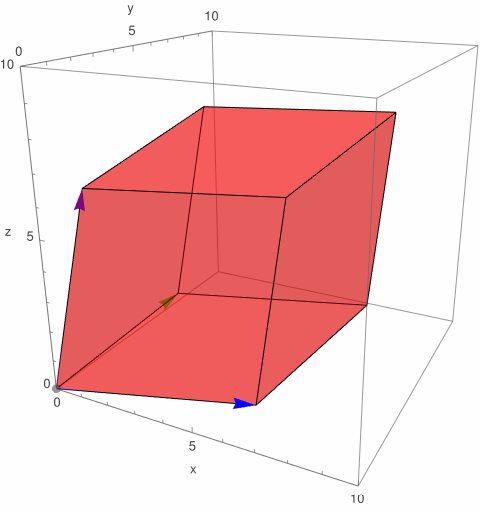}
        \caption{A trivector direction in $\mathbb{R}^3$}
        \label{fig:trivector}
    \end{minipage}
\end{figure}

Analogously, we establish the notion of volume in arbitrary dimensions.
We shall give a working example to demonstrate computation with wedge product.
\begin{example}
    Set our environment to be the three dimensional euclidean ($\mathbb{R}^3$) space with basis $\{e_1, e_2, e_3\}$. Let 
    \begin{gather*}
        x = (x_1, x_2, x_3) = x_1\,e_1 + x_2\,e_2 + x_3\,e_3\\
        y = (y_1, y_2, y_3) = y_1\,e_1 + y_2\,e_2 + y_3\,e_3
    \end{gather*}
    where $x_i, y_i$ are scalars.

    Let us give take the exterior product of $x$ and $y$ to obtain bivector $x \wedge y$ and represent it in terms of basis bivectors of $A_2$:

    \begin{align*}
    x \wedge y = \, &(x_1\,y_1)\,e_1 \wedge e_1 + (x_1\,y_2)\,e_1 \wedge e_2 + (x_1\,y_3)\,e_1 \wedge e_3 + \\ &(x_2\,y_1)\,e_2 \wedge e_1 + 
    (x_2\,y_2)\,e_2 \wedge e_2 + 
    (x_2\,y_3)\,e_2 \wedge e_3 + \\
    &(x_3\,y_1)\,e_3 \wedge e_1 + 
    (x_3\,y_2)\,e_3 \wedge e_2 + 
    (x_3\,y_3)\,e_3 \wedge e_3    
    \end{align*}

    Bivectors that are products of two copies of same basis vector $e_i$ are $0$ so they cancel out. Also, because of anti-commutativity, $e_i \wedge e_j = -e_j \wedge e_i$ so bivectors sharing the same vector factors are essentially the same, so further simplification can be made. We rewrite the product cleaner;

    $$x \wedge y = (x_1\,y_2 - x_2\,y_1)\,e_1 \wedge e_2 + (x_1\,y_3 - x_3\,y_1)\,e_1 \wedge e_3 + (x_2\,y_3 - x_3\,y_2)\,e_2 \wedge e_3$$

    One may recognize $x \wedge y$ is equal to the usual cross product of $x$ with $y$ in $\mathbb{R}^3$. Switching the bivector, for example $e_1 \wedge e_2$ with the non-apparent basis vector $e_3$ will be helpful in seeing this result. Since in $\mathbb{R}^3$, a planar direction is identical to the perpendicular direction which is the complementary dimensional direction, in this case a $3-2 = 1$ dimensional perpendicular vector.

    The cross product is only defined for $\mathbb{R}^3$ because in the product, we need to identify each linearly independent term with a direction, that is a vector. To have this property, we require codimension$ = 1$ that is only true for $\mathbb{R}^3$
    {\color{blue}}
\end{example}

\begin{example}\label{wedgedet}
    Let us perform a similar computation in $\mathbb{R}^3$ of a trivector $x \wedge y \wedge z$, for vectors
    \begin{gather*}
        x = (x_1, x_2, x_3) = x_1\,e_1 + x_2\,e_2 + x_3\,e_3\\
        y = (y_1, y_2, y_3) = y_1\,e_1 + y_2\,e_2 + y_3\,e_3\\
        z = (y_1, y_2, y_3) = z_1\,e_1 + z_2\,e_2 + z_3\,e_3
    \end{gather*}
    After a series of tedious simplifications on the product, we obtain
    $$
    x \wedge y \wedge z = (
    x_1\,y_2\,z_3 - x_1\,y_3\,z_2 - x_2\,y_1\,z_3 + x_2\,y_3\,z_1 + x_3\,y_1\,z_2 - x_3\,y_2\,yz_1 ) \, e_1 \wedge e_2 \wedge e_3
    $$
    {\color{blue}}
\end{example}

We must point out a very important observation here, that is, the coefficient is the determinant of vectors $x, y, z$. This determinant trivially equals the volume of the parallelepiped spanned by these three vectors in a metric space. This is no coincidence, and there is a direct relationship between the determinant, a central algebraic object in geometry, and exterior algebra. We postpone investigation of this relationship to Section 4.

\pagebreak

\section{Formal Construction of Exterior Algebra}

In this section, we focus on the construction of Exterior algebra from free associative algebra, namely the \emph{Tensor algebra}. The Tensor algebra may be viewed as a non-commutative generalization of the Tensor Products. The construction reveals how the structure of Tensor algebra -which is central in physics- and Exterior algebra are intrinsically connected, and hints us the great power of Exterior Algebra to accurately model physical phenomena and geometry.
To clarify this construction, we first compare this it with a similar but more familiar example: the construction of the polynomial ring in $n$ variables from free associative algebra generated by a set of variables $X$.

\subsection{Polynomial Ring}
As we stated, we will first introduce the construction of Polynomial Rings in $n$ variables as a familiar pillar to stand on while observing the construction of the tensor algebra.

\begin{definition}
Let $R$ be a commutative ring with identity, and let $X = \{x_1, x_2, ..., x_n\}$ be a set of $n$ variables. The \textbf{Polynomial Ring} in $n$ variables over $R$ is denoted $R[X] = R[x_1, x_2, ..., x_n]$
\end{definition}
\begin{remark}
    It is well known that Polynomial Ring $R[x_1, ..., x_n]$ is a commutative $R$-algebra with usual multiplication and scalar addition.
\end{remark}

Let $A$ be a polynomial ring in $n$ variables. An element $p \in A$ is a $R$-linear combination of monomials $x_1^{a_1}x_2^{a_2}...x_n^{a_n} \text{ where } a_i \in \mathbb{N}_0$. Note that $\mathbb{N}_0 = \mathbb{N} \cup \{0\}$.
So every element looks like:
$$p = \sum_{I=(a_1, ..., a_2)} c_I \; x_1^{a_1}x_2^{a_2}...x_m^{a_m} \quad \text{with} \quad c_I \in R \quad \text{ I are multi-indices}$$

\begin{proposition}
The following are the defining properties of polynomial ring:
\begin{itemize}
    \item Commutativity: the multiplication is commutative such that $x_ix_j = x_jx_i$
    \item A free \textbf{commutative} algebra: (Not to be confused with free associative algebra) The variables are algebraically independent meaning that there are no non-trivial polynomial relations among them, aside from the ones forced by commutativity and associativity.
    \item Connected graded: this algebra is naturally graded by total degree and connected, which is $$A = \bigoplus^\infty_{i = 0}A_i, \quad A_d = \text{span of degree d monomials.}$$
\end{itemize}
\end{proposition}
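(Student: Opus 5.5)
The three properties are established directly from the concrete description of $A = R[x_1,\dots,x_n]$ as the $R$-module with basis the monomials $x^{a} = x_1^{a_1}\cdots x_n^{a_n}$, $a = (a_1,\dots,a_n)\in\mathbb{N}_0^n$. Multiplication is defined on monomials by adding exponent vectors, $x^{a}x^{b} = x^{a+b}$, and then extended $R$-bilinearly. Each property reduces to a statement about the additive monoid $\mathbb{N}_0^n$, so I would treat them one at a time in the order listed.

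\emph{Commutativity.} Since addition in $\mathbb{N}_0^n$ is commutative, $x^a x^b = x^{a+b} = x^{b+a} = x^b x^a$. In particular $x_ix_j = x_jx_i$, taking $a,b$ to be standard unit vectors. Bilinearity extends this to arbitrary polynomials: write $p=\sum c_a x^a$ and $q = \sum d_b x^b$, and compare $\sum c_a d_b\, x^{a+b}$ with $\sum d_b c_a\, x^{b+a}$. This comparison uses the commutativity of $R$, which is assumed.

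\emph{Freeness.} I would make ``no relations other than those forced by commutativity and associativity'' precise in the sense of Remark~\ref{free}, namely as a universal property. For any commutative $R$-algebra $C$ and any elements $c_1,\dots,c_n\in C$, there is a unique $R$-algebra homomorphism $\varphi: A\to C$ with $\varphi(x_i)=c_i$. Existence comes from setting $\varphi(x^a) = c_1^{a_1}\cdots c_n^{a_n}$ and extending linearly. One then checks multiplicativity on monomials, which uses that the $c_i$ commute in $C$. Uniqueness holds because the $x_i$ generate $A$ as an algebra. Algebraic independence then follows, since a nonzero polynomial $p=\sum c_a x^a$ is nonzero in $A$ by the very definition of the monomial basis. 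As an alternative formulation, I would also show $A \cong R\langle X\rangle / (x_ix_j - x_jx_i)$. The map $R\langle X\rangle\to A$ sending words to monomials is surjective and kills the commutators. Conversely, every word can be reordered modulo the ideal into a sorted word, and sorted words correspond bijectively to monomials. Hence the kernel is exactly the ideal. This second formulation also foreshadows the exterior algebra construction that follows.

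\emph{Connected graded.} Let $A_d$ be the $R$-span of monomials with $|a| = a_1+\cdots+a_n = d$. Because the monomials form a basis and each has a unique total degree, $A = \bigoplus_{d\ge0} A_d$. The relation $|a+b| = |a|+|b|$ gives $A_dA_e\subseteq A_{d+e}$, which is exactly the condition in Definition~\ref{grading}. Finally, $A_0$ is spanned by the single monomial $x^{0}=1$, so $A_0 = R$, which is $\kk$ in the paper's setting; this is connectedness.

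The main obstacle is that the notion of ``free commutative algebra'' is only informal in the paper. The real work lies in fixing a precise meaning and, for the quotient formulation, in proving that the kernel contains nothing beyond the commutator ideal. I would handle the latter through the sorting/normal-form argument: every word is congruent to a unique sorted word, and distinct sorted words map to distinct monomials.
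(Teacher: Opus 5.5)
Your proposal is correct and follows the same route as the paper. The paper simply asserts that commutativity, algebraic independence and the grading by total degree follow directly from the definition of $R[X]$, and you supply the verifications it dismisses as standard; your universal-property formulation of freeness and the sorted-word normal-form argument for $R[X]\cong R\langle X\rangle/(x_ix_j-x_jx_i)$ go beyond the paper's proof, making precise what it treats as definitional and justifying the quotient description the paper adopts immediately afterward.
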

\begin{proof}
    Commutativity follows from the definition of $R[X]$. Algebraic independence is essentially the defining property of a polynomial ring. Connected graded structure can be trivially verified from the definition of monomial degree. These are standard properties of polynomial ring and follow directly from its construction. 
\end{proof}

With the exception of anti-commutativity - instead of commutativity, we want anti-commutativity - these items encompass all properties we wish exterior algebra to have. We will see this similarity is also shared in both constructions with minor tweaks. Let us go back to our investigation of the construction of polynomial rings.
Primarily, the algebra is built on the free associative algebra over $R$ generated by the set of variables $X$.
\newline

Our method to construct the polynomial ring will be as shown on the diagram below:

\begin{center}
\begin{minipage}{\textwidth}
\centering
\begin{tikzpicture}
    \centering
    \node[STEPBOX]  (a)        {Free associative algebra};
    \node[STEPBOX]  (b)    [below=of a]   {Polynomial Ring};
    
    \draw[->, very thick]   (a.south)  to node[right] {Impose commutativity relation by taking quotient}   (b.north);
\end{tikzpicture}
\end{minipage}
\end{center}

Consider the free associative algebra of $n$ variables over a set $X = \{x_1, \cdots, x_n\}$.
Notice that this is a primitive notion, in the sense that there are no other relations except:
\begin{itemize}
    \item \emph{associativity} that comes from grouping the adjacent symbols together
    \item \emph{$R$-linearity} which refers to the fact that each element of the algebra is a finite linear combination of formal words, with \emph{scalars} from $R$ as coefficients for the terms.
\end{itemize}
For example it is not commutative, such that $x_1x_2 \neq x_2x_1$ even if $x_1 = x_2$ in value. The key to understanding this structure lies in regarding each word of symbols as a distinct element, regardless of the values of the symbols.

To obtain a commutative polynomial ring from this structure, we must impose a commutativity relation on this crude algebraic structure, thereby forcing the variables to commute. This is achieved by taking the quotient of the free algebra with the two-sided ideal generated by the commutators $\langle x_ix_j - x_jx_i \rangle$:
$$R[x_1, \dots, x_n] := R\langle x_1, \dots, x_n \rangle \big/ \langle x_ix_j - x_jx_i \rangle$$

By doing so, we have rendered $x_ix_j - x_jx_i = 0 \Longleftrightarrow x_ix_j = x_jx_i$ (equality of cosets) in our definition of the polynomial ring, which effectively guarantees that the commutativity relation holds. 

\begin{example}
Let's give an example to show how commutativity imposed this way carries to monomials of greater degrees and thus is global over the algebra:

\begin{equation*}
    x_1^2x_3 = x_1x_1x_3 = x_1(x_1x_3) \underset{\mathrm{commutativity}}{=} x_1(x_3x_1) = (x_1x_3)x_1 \underset{\mathrm{commutativity}}{=} (x_3x_1)x_1 = x_3x_1x_1 = x_3x_1^2
\end{equation*}
\end{example}

\subsection{Exterior Algebra}
Contrary to the Polynomial Ring construction, we do not start from the same free associative algebra. Instead we begin with a more primitive notion -the free vector space on a set- and develop tensor product by imposing bilinearity relations on it. Our goal in doing so is to reveal the bilinear nature of tensor products, a feature that will also be present in \emph{wedge (exterior) product}. We then proceed to generalize this construction to the tensor algebra, and ultimately impose anti-commutativity to obtain exterior algebra.

\begin{center}
\begin{minipage}{\textwidth}
\centering
\begin{tikzpicture}
    \centering
    \node[STEPBOX]  (a)        {(Step 1) Free vector space over a set ($V \times V$)};
    \node[STEPBOX]  (b) [below=of a]    {Tensor Product $V \otimes V$};
    \node[STEPBOX]  (c) [below=of b]    {Tensor Algebra (free associative algebra)};
    \node[STEPBOX]  (d) [below=of c]    {Exterior Algebra};
    
    \draw[->, very thick]   (a.south)  to node[left] {Step 2, 3}    node[right] {Bilinearity relations (Quotient)}   (b.north);
    \draw[->, very thick]   (b.south)  to node[left] {Step 4}   node[right] {Algebra structure by graded sum}   (c.north);
    \draw[->, very thick]   (c.south)  to node[left] {Step 5}   node[right] {Anti-commutativity relation (Quotient)}   (d.north);
\end{tikzpicture}
\end{minipage}
\end{center}

We first define the concept of \emph{free vector space over a set}.

\begin{definition}\textbf{Free vector space on a set} (definition 1)
    Let S be a set. The free vector space over set S, F(S) is a vector space that allows arbitrary linear combinations $\sum a_i s$ of all formal words $s \in S$ as in the previous case of free non-commutative algebra. In set notation,
    $$F(S) := \left\{\sum_{s \in S} c_s s \middle | s \in S, c_s \in \kk\right\}$$
\end{definition}
\begin{definition}
    Let $S$ be any set. The \textbf{support} for a map $\varphi : S \to \kk$ is defined as $$supp(\varphi) := \{ s \in S \mid \varphi(s) \neq 0 \}$$
    i.e, the support of $\varphi$ is the set of points in $S$ where $\varphi$ does not vanish.
\end{definition}
\begin{definition} \textbf{Free vector space on a set} (definition 2) 
    Let $S$ be any set. The subset $\mathcal{F}(S)$ of $Fun(S, \kk)$ is defined as follows:
    $$\mathcal{F}(S) = \{\varphi \in Fun(S, \kk) \mid \varphi(s) \neq 0 \text{ for finitely many } s \in S \}$$
    The mappings that constitute $\mathcal{F(S)}$ are also known as linear mappings from $S$ to $\kk$ with finite support.

    We know that $Fun(S, \kk)$ is a vector space. $\mathcal{F(S)}$ defined as above is a subspace of $Fun(S, \kk)$ and called the free vector space over the set $S$.
\end{definition}
\begin{remark}
    The equivalence of the two given definitions of a free vector space on a set $S$ is a subtle point. To clarify it, one must consider the map that takes a linear combination $\sum_{s \in S}c_s s$ of formal words in $S$ to a function $\varphi : S \to F$ such that $\varphi(s) = c_s$ This way, it is clear that an $f \in \mathcal{F(S)}$ encodes a linear combination, by mapping a formal word (element) in $S$ to its coefficient in the linear combination.
\end{remark}

\subsubsection{Tensor Product}
Let us begin with the first step of construction of the tensor product.

\paragraph{Step 1}
Let $V, W$ be vector spaces. The free vector space over the cartesian product of these spaces is $F(V \times W)$. In this way, we have introduced sum and scalar multiple of elements to our space.
One point to pay attention is that $F(V \times W)$ is infinite dimensional, because there is no relation of "being scalar multiple of each other" in a free vector space (not free algebra!), since the space is "free" of all relations except $\kk$-linearity. Therefore, for example $2(v, w) \neq (2v, 2w)$ and both $(v, w)$ and $(2v, 2w)$ are distinct basis vectors in $F(V \times W)$. Consequently, we have infinitely many linearly independent elements, each forming a basis element, hence the infinite dimension.

\paragraph{Step 2}
We decide on the relations to be imposed. In our case, we wish to make vector addition and scalar multiplication bilinear, which requires the 4 relations given below:

\begin{itemize}
    \item $(v + v', w) - (v, w) - (v', w)$
    \item $(v, w + w') - (v, w) - (v, w')$
    \item $(cv, w) - c(v, w)$
    \item $(v, cw) - c(v, w)$
\end{itemize}

We will see how these impose bilinearity on $F(V \times W)$ in the next step.

\begin{remark}
    There is a distinction between a free vector space and a free associative algebra that is worth highlighting. Unlike a free vector space, in a free asociative algebra we have $$(x_i+x_j)x_k = x_ix_k + x_jx_k$$ in other words, bilinearity of addition and scalar multiplication already hold because of the extra multiplication that belongs to the structure. The algebra satisfies these relations as identities, originating from the extended linearity of the multiplication.
\end{remark}

\paragraph{Step 3}
First we, create equivalent classes of elements that are equal according to bilinearity by forming the generated by all 4 relations:

\begin{align*}
    R := \langle\, &(v + v', w) - (v, w) - (v', w), \\
    &(v, w + w') - (v, w) - (v, w'), \\
    &(cv, w) - c(v, w), \\
    &(v, cw) - c(v, w) \, \rangle \quad \text{where} \quad v, v' \in V, w, w' \in W, c \in \kk
\end{align*}

Next impose the relations of previous step by taking the quotient of $F(V \times W)/R$ by the subspace $R$ generated by 4 expressions above. This yields the quotient named the tensor product of vector spaces:

$$ V \otimes W :=  F(V \times W)/R $$

\begin{definition}
    We define the \textbf{tensor product} $v \otimes w := [(v, w)]$ of elements $v \in V, w \in W$ to be the equivalence class $[(v, w)]$ of $(v, w)$ in $F(V \times W)/R$
\end{definition}

\begin{proposition}
    The natural map 
    \begin{gather*}
        \varphi: V \times W \longrightarrow V \otimes W \\
        (v, w) \mapsto v \otimes w
    \end{gather*}
    is bilinear.
\end{proposition}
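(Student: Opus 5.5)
The natural map factors as $V \times W \xrightarrow{\iota} F(V \times W) \xrightarrow{\pi} F(V\times W)/R = V \otimes W$. Here $\iota$ sends $(v,w)$ to the basis element $(v,w)$ of the free vector space, and $\pi$ is the quotient projection, so $\varphi = \pi \circ \iota$. The only facts I will use are that $\pi$ is linear and that $\ker \pi = R$: two elements of $F(V\times W)$ have the same class exactly when their difference lies in $R$. The plan is to check the four bilinearity identities in the definition of a bilinear map one at a time. Each one should reduce to the observation that the corresponding difference in $F(V\times W)$ is one of the generators of $R$ listed in Step 3.

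\textbf{Additivity in the first argument.} For $v,v' \in V$ and $w \in W$ we want $(v+v')\otimes w = v\otimes w + v'\otimes w$. By linearity of $\pi$, the right-hand side equals $\pi\bigl((v,w)+(v',w)\bigr)$, where the sum is now formal, taken in $F(V\times W)$. The difference $(v+v',w)-(v,w)-(v',w)$ is the first generator of $R$, so it maps to $0$ under $\pi$, and the two classes agree.

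\textbf{Remaining identities.} Additivity in the second argument follows in the same way from the second generator. The scalar identity $(cv)\otimes w = c(v\otimes w)$ uses the third generator: $\pi\bigl((cv,w)\bigr) - c\,\pi\bigl((v,w)\bigr) = \pi\bigl((cv,w)-c(v,w)\bigr) = 0$. The fourth generator gives $v\otimes(cw) = c(v\otimes w)$ identically.

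The only point needing care is notational rather than mathematical. In $F(V\times W)$, the expression $(v,w)+(v',w)$ is a formal sum of two distinct basis elements and is not the pair $(v+v',w)$; likewise $c(v,w)$ is not the same element as $(cv,w)$. So I will be explicit that the vector space operations on the right-hand sides are carried out first in $F(V\times W)$ and then transported to the quotient through the linearity of $\pi$. With that distinction stated, each of the four identities becomes a single line.
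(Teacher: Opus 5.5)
Your proposal is correct and takes essentially the paper's route. The paper also writes out only the first-argument additivity case, via the cosets $(v+v',w)+R = (v,w)+(v',w)+R$, and says the other three follow from the remaining generators of $R$; your factorization $\varphi = \pi\circ\iota$ and your remark separating the formal sum $(v,w)+(v',w)$ from the pair $(v+v',w)$ just make explicit what the paper leaves implicit in its coset notation.
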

\begin{proof}
    We show how our statement holds for one relation in detail and the other three will follow the same process:
    \begin{gather*}
        (v + v', w) - (v, w) - (v', w) \in R\\
        \text{ thus } \; (v + v', w) - (v, w) - (v', w) + R = 0 + R \; \text{ in } \; V \otimes W \\
        \text{it follows that } (v + v', w) + R =  (v, w) + (v', w) + R
    \end{gather*}
    We know 
    \begin{gather*}
        (v+v')\otimes w = [(v + v', w)] = (v + v', w) + R\\
        \text{and }\\ 
        v \otimes w + v' \otimes w = [(v, w)] + [(v', w)] = (v, w) + (v', w) + R
    \end{gather*}
    Then, 
    \begin{equation*}
        \varphi(v+v', w) = (v+v')\otimes w = v \otimes w + v' \otimes w = \varphi(v, w) + \varphi(v', w)
    \end{equation*}

    The remaining cases follow by similar arguments:
    \begin{itemize}
        \item $v \otimes (w, w') = v \otimes w + v \otimes w'$
        \item $ (cv) \otimes w = c(v \otimes w)$
        \item $ v \otimes (cw) = c(v \otimes w)$
    \end{itemize}

    Hence we have shown bilinearity is inherited by the given four relations.
\end{proof}
\subsubsection{Transition to Exterior Algebra}
\paragraph{Step 4}
Now we have at our disposal the tensor product, 
\begin{gather*}
    \otimes : V \times W \longrightarrow V \otimes W \\
    (v, w) \mapsto v \otimes w
\end{gather*}
This is a bilinear operation on top of the free vector space structure.
Using tensor products of vector spaces, we then construct the tensor algebra, which serves as the analogue of the free associative algebra $k\langle x_1, ..., x_n\rangle$. Both of these objects are examples of free associative algebras. 
Afterwards, in contrast to the polynomial ring case, we will impose an "anti-commutativity" relation on the algebra to finally obtain the exterior Algebra that we seek after.

\begin{definition}
    We define the tensor algebra $T(V)$ on V as the direct sum:
    $$T(V) = \\(k) \oplus (V) \oplus (V \otimes V) + \cdots = \bigoplus_{n = 0}^\infty V^{\otimes n} $$
\end{definition}

As mentioned before, the tensor algebra is a free associative algebra on V, such that we do not yet have a commutativity or anti-commutativity relation imposed upon. Now we introduce the anti-commutativity to $T(V)$. There are two ways to achieve this. We shall explain each method with their advantages and drawbacks.

\paragraph{Method 1: Using relation $v \otimes v = 0$}\mbox{}\\
This method orders us to take the quotient of $T(V)$ with $\langle v \otimes v\rangle$, the subspace generated by $v \otimes v$. 
\begin{notation}
    $\langle v \otimes v\rangle$ = $\langle v \otimes v \mid v \in V \rangle$.
\end{notation}
Let $E(V) = T(V)/\langle v \otimes v\rangle$.
\begin{notation}
    From now on denote the multiplication with the wedge symbol $\wedge$, instead of $\otimes$.   
\end{notation}
How does anti-commmutativity reveals itself in this setup? It is a slightly subtle point.

\begin{proposition}
    Multiplication on $E(V)$ is anti-commutative.
\end{proposition}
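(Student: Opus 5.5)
The plan is to get anti-commutativity out of the relation $v \wedge v = 0$ by polarization, exactly as in Remark \ref{rmk:altanti}, and then carry it from vectors to all products. First I will make precise what the statement means. In $E(V) = T(V)/\langle v\otimes v\rangle$ the ideal is two-sided and generated by the elements $v\otimes v$. It is graded, since each generator is homogeneous of degree $2$. So $E(V)$ inherits the $\mathbb{N}$-grading, and the multiplication $\wedge$ is the one induced from $\otimes$; it is well defined, associative and bilinear because we quotient by a two-sided ideal. Anti-commutativity will then mean two things. First, $x \wedge y = -y\wedge x$ for $x,y\in V$. Second, in graded form, $x\wedge y = (-1)^{kl} y\wedge x$ for $x\in E_k$, $y\in E_l$, matching Proposition \ref{prop:parity}.

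\textbf{Key step (vectors).} For $x,y\in V$ the element $x+y$ also lies in $V$, so $(x+y)\otimes(x+y)$ lies in the ideal. Expanding by bilinearity of $\otimes$ gives
\[
(x+y)\otimes(x+y) = x\otimes x + x\otimes y + y\otimes x + y\otimes y .
\]
Here $x\otimes x$ and $y\otimes y$ are also in the ideal, so $x\otimes y + y\otimes x \in \langle v\otimes v\rangle$. Passing to cosets, this says $x\wedge y = -y\wedge x$ in $E(V)$. This is the whole content for degree one. It uses nothing about the characteristic, since the converse direction is not needed.

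\textbf{Extension to higher grades.} Every element of $E_k$ is a linear combination of products $v_1\wedge\cdots\wedge v_k$ with $v_i\in V$, because $V^{\otimes k}$ is spanned by pure tensors. By bilinearity it therefore suffices to treat $x=v_1\wedge\cdots\wedge v_k$ and $y=w_1\wedge\cdots\wedge w_l$. Using associativity, I will move each $w_j$ leftward past the $k$ factors $v_i$ one adjacent swap at a time. Each swap contributes a factor $-1$ by the key step, so moving all $l$ factors costs $(-1)^{kl}$. This reproduces the argument of Proposition \ref{prop:parity}, now justified from the quotient relation rather than assumed.

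\textbf{Main obstacle.} I expect the only subtle point to be conceptual rather than computational: the ideal is generated by $v\otimes v$ for \emph{all} $v\in V$, not only basis vectors. That is what allows us to apply the relation to $x+y$. I will stress this point explicitly. I will also note that for general, non-homogeneous elements the multiplication is only graded-commutative, not literally anti-commutative; for instance, scalars in $E_0$ commute with everything. The proposition should therefore be read in the sense above.
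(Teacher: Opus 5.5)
Your proposal is correct and follows the paper's proof: the paper likewise expands $0=(v+w)\wedge(v+w)$ by distributivity and cancels $v\wedge v$ and $w\wedge w$ to conclude $v\wedge w=-w\wedge v$ for $v,w\in V$. Your extension to the graded sign rule $(-1)^{kl}$ for homogeneous elements goes beyond what the paper proves at this point, and so does your caveat that literal anti-commutativity fails for non-homogeneous elements (for instance, scalars); both are correct and make precise how the statement should be read.
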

\begin{proof}
    \begin{align*}
        0 = &(v+w) \wedge (v+w)\\
        = &(v \wedge v) + (v \wedge w) + (w \wedge v) +  (w \wedge w) \quad &(\text{distributivity})\\
        = &0 + (v \wedge w) + (w \wedge v) + 0 \quad &(\text{in } E(V))\\
        = &(v \wedge w) + (w \wedge v)
    \end{align*}
    Thus we have
    $$(v \wedge w) = -(w \wedge v)$$
\end{proof}

It seems we have successfully implemented anti-commutativity to our multiplication operation. However, there is a caveat: what if we are working on a field of characteristic 2? Since in a field $F$ of characteristic 2 for an element $a \in F$, we have $(-1)a = -a = a$. Therefore our final equation becomes:
$$(v \wedge w) = -(w \wedge v) = (w \wedge v)$$
This means instead of anti-commutativity, we get commutativity! This happens because by taking quotient with $\langle v \otimes v\rangle$ we do not directly create equivalence classes of anti-commutative elements, rather we impose it implicitly, therefore our control over the conditions is limited.

\paragraph{Method 2: Using relation $v \otimes w + w \otimes v = 0 $}\mbox{}\\
This time, when we take the quotient $E(V) = T(V)/\langle v \otimes v\rangle$, we directly create equivalence classes of anti-commutative elements, thus rendering $v \wedge w = -w \wedge v$. The reader is encouraged to check that the construction of exterior Algebra $E(V)$ with this method gives us the desired multilinear, anti-commutative algebra over any vector space V, including the ones defined over fields of characteristic 2.

Here is an overview of the construction of polynomial ring in $n$ variables and exterior Algebra together for comparison.

\vspace{1em}
\begin{minipage}{0.3\textwidth}
\centering
\vspace{3.5cm}
\begin{tikzpicture}
 \centering
    \node[STEPBOX]  (a)        {Free associative algebra};
    \node[STEPBOX]  (b)    [below=of a]   {Polynomial Ring};
    
    \draw[->, very thick]   (a.south)  to node[right] {Commutativity}   (b.north);
\end{tikzpicture}
\end{minipage}
\hfill
\begin{minipage}{0.55\textwidth}
\centering
\begin{tikzpicture}
\centering
    \node[STEPBOX]  (a)        {(Step 1) Free vector space over a set ($V \times V$)};
    \node[STEPBOX]  (b) [below=of a]    {Tensor Product $V \otimes V$};
    \node[STEPBOX]  (c) [below=of b]    {Tensor Algebra (free associative algebra)};
    \node[STEPBOX]  (d) [below=of c]    {Exterior Algebra};
    
    \draw[->, very thick]   (a.south)  to node[left] {Step 2, 3}    node[right] {Bilinearity relations}   (b.north);
    \draw[->, very thick]   (b.south)  to node[left] {Step 4}   node[right] {Algebra structure}   (c.north);
    \draw[->, very thick]   (c.south)  to node[left] {Step 5}   node[right] {Anti-commutativity relation}   (d.north);
\end{tikzpicture}
\end{minipage}
\vspace{1em}

\pagebreak

\section{The Determinant}
\subsection{The Universal Property}
In this section letter $A$ does not specifically denote the Exterior algebra over a vector space $V$, but any suitable algebra determined by the context. Instead, we denote the Exterior algebra as $E(V)$ 
The construction given in detail in the preceding section is not merely a quotient algebra, but it is the best ("freest") algebra satisfying anti-commutativity. 

\begin{remark}
The notion of universality is originally a concept from category theory. As with Remark \ref{free}, we will not refer to the original categorical definition. Instead, we content ourselves with an algebraic version of it.
\end{remark}

A universal property defines an object by how every other object of similar type can be uniquely reached from it via structure-preserving maps (homomorphisms).

\begin{definition} Universal Property \\
Let V be a vector space, and let $\varphi : V \to A$ be a canonical map into some algebra A, typically the quotient $A = T(V)/I$ for some suitable ideal $I$. Let $f : V \to B$. be a linear map into another algebra $B$ satisfying certain compatibility conditions (e.g., anti-commutativity) We say that A satisfies a universal property if there exists a unique homomorphism $\bar{f} : A \to B$ such that $f = \bar{f} \circ \varphi$. In other words, there exists a unique map $\bar{f}$ that extends $f$ such that the diagram below commutes:

$$
\begin{tikzcd}[row sep = 4em, column sep = 4em]
& V \arrow{r}{\varphi} \arrow{dr}{f} & A \arrow{d}{\bar{f}} \\ & & B
\end{tikzcd}
$$
\end{definition}

In particular, Exterior algebra satisfies the universal property. A unital algebra is an algebra with multiplicative identity. Let $A$ be any unital algebra and $f$ a linear map such that following conditions hold:

\begin{itemize}
    \item $f(v)f(v) = 0 \quad \forall v \in V$
    \item $f(v)f(w) + f(w)f(v) = 0 \quad \forall v, w \in V$
\end{itemize}
Without proof, we give this fact:

\begin{proposition}
    There exists a unique algebra homomorphism $\bar{f} : E(V) \longrightarrow A$ extending $f$ such that $f = \bar{f} \circ \varphi$, where $\varphi : V \longrightarrow E(V)$ is the canonical map $v \mapsto v \text{ mod } I$, where $I = \langle x_ix_j + x_jx_i \rangle$, i.e. the ideal that encodes anti-commutativity relation.
\end{proposition}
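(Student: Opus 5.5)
We build $\bar f$ in two stages: first extend $f$ to the tensor algebra, then check that the extension vanishes on the ideal $I$ so that it descends to $E(V)=T(V)/I$. Uniqueness is handled at the end by a generation argument.

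\textbf{Stage 1: extension to $T(V)$.} For each $k\ge 1$, the map $V^k\to A$ given by $(v_1,\dots,v_k)\mapsto f(v_1)f(v_2)\cdots f(v_k)$ is multilinear. This uses only that $f$ is linear and that multiplication in $A$ is bilinear and associative. By the construction of the tensor product in Section 3 (Steps 1--3), a map out of $F(V\times\cdots\times V)$ that kills the bilinearity relations factors through the quotient. Applying this to each factor in turn gives a linear map
$$F_k : V^{\otimes k}\to A,\qquad v_1\otimes\cdots\otimes v_k\mapsto f(v_1)\cdots f(v_k).$$
Set $F_0(c)=c\cdot 1_A$. Summing over the graded pieces gives a linear map $F:T(V)\to A$. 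It is an algebra homomorphism, since on simple tensors concatenation is sent to a product by associativity in $A$, and linearity extends this to all elements.

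\textbf{Stage 2: descent to $E(V)$.} The ideal $I$ is generated as a two-sided ideal by the elements $v\otimes w+w\otimes v$ (and, in the version of Method 1, also by $v\otimes v$). By the hypotheses on $f$,
$$F(v\otimes w+w\otimes v)=f(v)f(w)+f(w)f(v)=0 \quad\text{and}\quad F(v\otimes v)=f(v)f(v)=0.$$
A general element of $I$ is a finite sum $\sum a_i g_i b_i$ with $g_i$ a generator and $a_i,b_i\in T(V)$. Since $F$ is a homomorphism, $F(a_i g_i b_i)=F(a_i)F(g_i)F(b_i)=0$, so $F(I)=0$. Therefore $F$ induces a well-defined homomorphism $\bar f:E(V)\to A$ with $\bar f(x+I)=F(x)$. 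We have $\bar f\circ\varphi(v)=F(v)=f(v)$, as required.

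\textbf{Uniqueness.} $E(V)$ is generated as a unital algebra by $\varphi(V)$. By Proposition~\ref{prop:basis}, every element is a linear combination of $1$ and products $\varphi(v_1)\wedge\cdots\wedge\varphi(v_k)$. Any homomorphism $g$ with $g\circ\varphi=f$ and $g(1)=1_A$ must therefore send such a product to $f(v_1)\cdots f(v_k)$, so $g$ is determined on a spanning set and equals $\bar f$.

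The main obstacle is the well-definedness of $F_k$ on $V^{\otimes k}$. The paper only constructs the binary tensor product and never states its universal property, so we must argue directly from the quotient $F(V\times W)/R$. The plan is to define the map on the free vector space basis, check that each of the four generators of $R$ maps to $0$ by multilinearity, and then iterate using $V^{\otimes k}=V^{\otimes(k-1)}\otimes V$.
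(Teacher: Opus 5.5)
The paper states this proposition explicitly without proof (``Without proof, we give this fact''), so there is no argument of the paper's to compare against. Your proof is the standard one and it is correct. You extend $f$ multiplicatively to $T(V)$, check that the extension kills the generators of $I$, and descend to the quotient $E(V)=T(V)/I$.

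What your route adds is that it runs directly on the paper's own Section~3 construction. The universal property of $V\otimes W=F(V\times W)/R$ is something the paper never states but implicitly relies on. You derive it by hand: define the map on the free basis, check that the four generators of $R$ go to $0$, and iterate via $V^{\otimes k}=V^{\otimes(k-1)}\otimes V$ using the bilinear map $(t,v)\mapsto F_{k-1}(t)f(v)$. The descent step is also sound for the ideal as written, $\langle x_ix_j+x_jx_i\rangle$. By bilinearity it coincides with the ideal generated by all $v\otimes w+w\otimes v$, and $F$ kills those by hypothesis.

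Two small remarks.

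\begin{itemize}
\item Your insistence on $g(1)=1_A$ in the uniqueness step is essential, not cosmetic. If non-unital homomorphisms were allowed, uniqueness would fail. For example, with $f=0$ and $A\neq 0$, both $\bar f$ and the zero map satisfy $g\circ\varphi=f$. So the statement must be read for unital homomorphisms, as the paper's subsequent remark about unital algebras indicates.
\item For uniqueness you do not need Proposition~\ref{prop:basis}. That $1$ and products of elements of $\varphi(V)$ span $E(V)$ is immediate, because $1$ and simple tensors span $T(V)$ and $E(V)$ is a quotient of it.
\end{itemize}
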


Therefore, the following diagram commutes:

$$
\begin{tikzcd}[row sep = 4em, column sep = 4em]
& V \arrow{r}{\varphi} \arrow{dr}{f} & E(V) \arrow{d}{\bar{f}} \\ & & A
\end{tikzcd}
$$

In particular $E(V)$ is initial among pairs $(A,f)$ consisting of a unital $\kk$–algebra $A$ and a linear map $f:V\to A$ with $f(v)^2=0$ for all $v\in V$, which means $E(V)$ is the \emph{freest} algebra in which products of equal generators vanish (and hence, when $2$ is invertible ($char(\kk) \neq 2$), the generators anti-commute). 

Put another way, any alternating linear map from $V$ to an algebra $A$ extends uniquely to an algebra homomorphism from the exterior algebra $E(V)$.
This is what makes exterior algebra "universal" for enforcing anti-symmetry.

\subsection{Uniqueness of the Determinant}
For the following discussion, let $A$ be a $n \times n$ matrix where $n \in \mathbb{N}$ and $V$ be an $n$-dimensional vector space over field $\kk$. $a_{i, j}$ denotes the entry of $A$ on the row $i$ and column $j$.

\begin{definition}
    \textbf{ij - Minor} $M_{ij}$ is the $(n-1) \times (n-1)$ matrix obtained by deleting row $i$ and column $j$ from $A$.
    the \textbf{Cofactor} $C_{ij}$ is defined as $(-1)^{i+j}\left| M_{ij}\right|$, where $\left| M_{ij}\right|$ is the determinant of the minor $M_{ij}$.
\end{definition}

\begin{definition}
    \textbf{Cofactor expansion}, or \textbf{Laplace Expansion}, is a sum defined on a $n \times n$ matrix for a chosen row or column. Without loss of generalization we give the definition of cofactor expansion for $i^th$ row of a $n \times n$ matrix $A$.

    $$S_i := \sum_{j = 1}^n a_{ij}\,C{ij}$$
\end{definition}
Trivially, we know that cofactor expansion of all rows and columns are equal in value.
Also, we know that Leibniz formula that runs over permutations of entries of $A$, given as follows:
$$\sum_{\sigma \in S_n}(sgn \,\sigma)\,\prod_{i = 0}^na_{i, \sigma(j)}$$
equals to the determinant of matrix $A$, $det(A)$.

\begin{theorem}
    Leibniz formula and Cofactor expansion are equivalent:
    $$det(A) = \sum_{\sigma \in S_n}(sgn \,\sigma)\,\prod_{i = 0}^na_{i, \sigma(j)} = S_i = \sum_{j = 1}^n a_{ij}\,C{ij}$$ 
\end{theorem}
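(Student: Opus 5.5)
The plan is to start from the Leibniz formula, which the text above takes as the definition of $det(A)$, and regroup its terms according to the value $\sigma(i)$ for the fixed row $i$. Since each $\sigma \in S_n$ sends $i$ to exactly one column $j$, the sum over $S_n$ splits as a disjoint union over $j = 1, \dots, n$ of sums over $\{\sigma \in S_n \mid \sigma(i) = j\}$. In each block the factor $a_{i,j}$ can be pulled out, giving
$$det(A) = \sum_{j=1}^n a_{ij} \sum_{\sigma(i) = j} (sgn\,\sigma) \prod_{r \neq i} a_{r,\sigma(r)}.$$
It then suffices to show that the inner sum equals $C_{ij} = (-1)^{i+j}|M_{ij}|$, where $|M_{ij}|$ is itself computed by the Leibniz formula for $(n-1)\times(n-1)$ matrices.

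The key step is a bijection between $\{\sigma \in S_n \mid \sigma(i)=j\}$ and $S_{n-1}$ that respects signs up to the factor $(-1)^{i+j}$. Relabel the rows $\{1,\dots,n\}\setminus\{i\}$ and the columns $\{1,\dots,n\}\setminus\{j\}$ order-preservingly as $1,\dots,n-1$. Then $\sigma$ restricts to a permutation $\tau \in S_{n-1}$, and the product $\prod_{r\neq i} a_{r,\sigma(r)}$ is exactly the Leibniz term of $M_{ij}$ indexed by $\tau$. This correspondence is clearly bijective. For the sign, write $\sigma = c_2 \circ \hat\tau \circ c_1$, where $\hat\tau$ fixes $n$ and acts as $\tau$ on the rest, and $c_1, c_2$ are the cyclic shifts moving $i$ to position $n$ and $n$ to position $j$. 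These cycles have lengths $n-i+1$ and $n-j+1$, hence signs $(-1)^{n-i}$ and $(-1)^{n-j}$. Their product is $(-1)^{i+j}$, so $sgn\,\sigma = (-1)^{i+j}\,sgn\,\tau$.

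An alternative, closer in spirit to the paper's use of the wedge product, counts inversions directly: the pairs involving position $i$ contribute a parity equal to that of $(i-1)+(j-1)$. The same parity count can also be phrased as moving $e_j$ to the front of a wedge product using the sign rule for permutations stated earlier. I would use the cycle decomposition as the main argument and the inversion count as a check.

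Substituting the bijection into the regrouped sum gives $\sum_j a_{ij}(-1)^{i+j}|M_{ij}| = S_i$. The expansion along a column follows by applying the same argument to $A^T$, using that Leibniz is invariant under transposition (reindex by $\sigma^{-1}$, noting $sgn\,\sigma^{-1} = sgn\,\sigma$). The main obstacle is the sign bookkeeping in the second paragraph, in particular checking that the order-preserving relabelling does not introduce extra inversions beyond those counted by the two cycles. I would verify this carefully, perhaps first on a small case such as $n=3$, $i=2$, $j=1$.
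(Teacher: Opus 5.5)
Your proposal is correct and follows essentially the same route as the paper: split the Leibniz sum according to the column $\sigma(i)=j$, identify each block with the Leibniz expansion of the minor by restricting $\sigma$ to the remaining indices, and pick up the sign $(-1)^{i+j}$. The only differences are in care, not in method. You treat an arbitrary row directly and justify the sign through the factorization $\sigma = c_2\circ\hat\tau\circ c_1$. The paper instead handles only row $1$ ``without loss of generality'' inside a nominal induction, whose hypothesis is really just the Leibniz definition of $\det(M_{1j})$, and it asserts the factor $(-1)^{1+j_0}$ without the explicit bookkeeping.
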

\begin{proof}
    We prove the equality of formulas by induction on $n$. Also, we consider, without loss of generalization, $1^{st}$ row cofactor expansion $S_1$.
    
    Base case: $n = 1$
    We consider $A = [a_{11}]$
    Sign of the identity permutation is $+1$ and cofactor in this case is defined to be $1$. Thus base case holds.

    Inductive hypothesis:
    Assume for all $(n-1)\times(n-1)$ matrices, the Leibniz formula equals cofactor expansion, i.e. the determinant.

    Let $\sigma \in S_n$ and $\bar{\sigma}$ be the restriction of $\sigma$ to $\{2, ..., n\}$. $\sigma(1)$ cannot be in the image of $\bar{\sigma}$ and since there is $n-1$ elements in the image and we have $n$ possible slots, by pigeonhole principle we have one option, which we denote as $\sigma(1) := j_0$ 
    Then for the particular extension $\sigma$, we have:
    $$
    \prod_{i = 1}^na_{i\sigma(i)} = a_{1j_0}\prod_{i = 2}^n a_{i\bar{\sigma}(i)}
    $$

    Moving $j_0$ to the first position, such that $\sigma(1) = j_0$ contributes $(-1)^{1+j_0}$ to the sign relative to $\bar{\sigma}$, the permutation of the remaining indices. So we relate signs of $\sigma$ and $\bar{\sigma}$ as follows:
    $$(sgn\;\sigma) = (-1)^{1+j_0}(sgn\;\bar{\sigma})$$ 

    By inductive hypothesis, the determinant of the minor $M_{1j_0}$, which is a $(n - 1) \times (n - 1)$ matrix is:
    $$det(M_{1j_0}) = \sum_{\bar{\sigma} \in S_{n-1}}(sgn\;\bar{\sigma})\prod_{i = 2}^n a_{i \bar{\sigma}(i)}$$

    For each $\sigma$ with $\sigma(1) = j_0$:
    $$
    (sgn\;\sigma)\prod_{i=1}^n a_{i\sigma(i)} = (-1)^{1+j_0}(sgn\;\bar{\sigma}) a_{1j_0} \prod_{i = 2}^n a_{i \bar{\sigma}(i)} = a_{1j_0} (-1)^{1+j_0}det(M_{1j_0})
    $$

    Now we partition $S_n$ according to $\sigma(1) = j$ for $j \in \{1, ..., n\}$. Then summing over all $\sigma$:

    \begin{align*}
        det(A) &= \sum_{\sigma \in S_n}(sgn\;\sigma)\prod_{i = 1}^n a_{i\sigma(i)}\\
        &= \sum_{j = 1}^n a_{1j}(-1)^{1+j} \sum_{\bar{\sigma} \in S_{n-1}} \prod_{i = 2}^n a_{i\bar{\sigma}(i)}\\
        &= \sum_{j = 1}^n a_{1j}(-1)^{1+j}\,det(A_{1j}) = \sum_{j = 1}^n a_{1j}\,C_{1j} 
    \end{align*}
\end{proof}

The determinant has several defining properties that are listed below.
\begin{corollary}\label{cor:detprop}
    Determinant $det : V^n \to \kk$ is an alternating function that is multilinear in rows, maps standard basis of $V$ to $1$.
\end{corollary}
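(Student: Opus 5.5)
We view $\det$ as a function of the $n$ rows $r_1,\dots,r_n \in V$ of $A$, where $r_i = \sum_j a_{ij} e_j$, and work directly from the Leibniz formula
$$\det(r_1,\dots,r_n) = \sum_{\sigma \in S_n} (sgn\,\sigma) \prod_{i=1}^n a_{i\sigma(i)}.$$
The preceding theorem identifies this with cofactor expansion, so we may switch between the two descriptions as convenient. We check the three properties in turn.

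\textbf{Normalization.} The matrix with rows $e_1,\dots,e_n$ is the identity, so $a_{i\sigma(i)} = 0$ unless $\sigma(i) = i$. Hence only $\sigma = \mathrm{id}$ contributes, and the sum equals $(+1)\cdot 1 = 1$.

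\textbf{Multilinearity.} Fix all rows except the $k$-th, and replace $r_k$ by $c\,r_k + d\,r_k'$. Every Leibniz term contains exactly one factor from row $k$, namely $a_{k\sigma(k)}$, and that factor becomes $c\,a_{k\sigma(k)} + d\,a'_{k\sigma(k)}$. Expanding each term and summing gives $c\det(\dots,r_k,\dots) + d\det(\dots,r_k',\dots)$. A cleaner alternative is the cofactor expansion along row $k$ (the theorem, applied to row $k$ rather than row $1$). There $\det = \sum_j a_{kj} C_{kj}$, and the cofactors $C_{kj}$ do not involve row $k$, so linearity in row $k$ is immediate.

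\textbf{Alternating.} This is the step requiring care. Suppose $r_p = r_q$ with $p \neq q$, and let $\tau = (p\;q)$ be the transposition. The map $\sigma \mapsto \sigma\circ\tau$ is a fixed-point-free involution on $S_n$. It pairs the even permutations with the odd ones, since $sgn(\sigma\tau) = -sgn\,\sigma$. Because rows $p$ and $q$ coincide,
$$\prod_i a_{i,\sigma\tau(i)} = \prod_i a_{i\sigma(i)},$$
since the swap only exchanges the factors $a_{p\sigma(q)} = a_{q\sigma(q)}$ and $a_{q\sigma(p)} = a_{p\sigma(p)}$. Each pair of terms therefore cancels, and $\det = 0$.

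This argument gives the vanishing (alternating) form directly and so avoids any dependence on the characteristic, in line with Remark \ref{rmk:altanti}. Anti-commutativity under swapping two rows then follows by the same polarization as in that remark. The only real obstacle is justifying that $sgn(\sigma\tau) = -sgn\,\sigma$, i.e.\ that the sign is well defined and multiplicative. The paper's definition of sign via a count of transpositions takes this for granted, so I would invoke it as a standard fact. If a self-contained argument is wanted, I would instead define the sign by the parity of the number of inversions and check that composing with a transposition changes that count by an odd number.
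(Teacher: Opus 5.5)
Your proof is correct and follows the same route as the paper. The paper says normalization is immediate and that multilinearity and the alternating property follow from the Leibniz formula; you carry out exactly those Leibniz-formula verifications in detail. Your pairing of $\sigma$ with $\sigma\circ(p\;q)$ for the alternating property is a sound, characteristic-free way to fill in the step the paper leaves implicit.
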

\begin{proof}
    The fact that the determinant of the identity matrix equals 1 is immediate from the definition. The rest of the properties follow easily from the Leibniz formula. 
\end{proof}

We desire to show that determinant is the only function from $V^n$ to $\kk$ that satisfies the properties given in \ref{cor:detprop}. Recall that, according to the universal property of exterior algebra, any alternating linear $f: V \to A$ is factors into two steps, and goes through exterior algebra $E(V)$ of $V$, first being the canonical map to $E(V)$ and second is the unique homomorphism from it.

\begin{theorem}\label{detunique}
    Determinant is the unique function that satisfies the properties given in \ref{cor:detprop}
\end{theorem}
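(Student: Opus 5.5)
Let $D : V^n \to \kk$ be any function with the three properties of Corollary \ref{cor:detprop}: multilinear in its $n$ arguments (the rows), alternating, and $D(e_1, \dots, e_n) = 1$ for the standard basis $B = \{e_1, \dots, e_n\}$. The plan is to show that $D$ is completely determined by these properties, and in fact equals the Leibniz formula. Since the determinant satisfies the same properties, this gives $D = det$.

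The first step is to expand everything in the basis. Write the $i$-th argument as $v_i = \sum_{j=1}^n a_{ij} e_j$. Applying multilinearity in each of the $n$ slots gives
$$D(v_1, \dots, v_n) = \sum_{j_1, \dots, j_n = 1}^n a_{1 j_1} a_{2 j_2} \cdots a_{n j_n}\, D(e_{j_1}, \dots, e_{j_n}).$$

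The second step uses the alternating property. If two indices $j_k = j_l$ with $k \neq l$ coincide, the term vanishes. This is the same argument as Proposition \ref{prop:nil}, now for $D$: swapping adjacent arguments changes the sign, by the polarization argument of Remark \ref{rmk:altanti}, which is legitimate since $char(\kk) = 0$. So only tuples $(j_1, \dots, j_n)$ forming a permutation $\sigma \in S_n$ survive, where $\sigma(i) = j_i$.

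The third step is to show that $D(e_{\sigma(1)}, \dots, e_{\sigma(n)}) = (sgn\;\sigma)\, D(e_1, \dots, e_n) = sgn\;\sigma$. I would argue this exactly as in the proposition on interchanging two factors and in Proposition \ref{prop:parity}. Write $\sigma$ as a product of $m$ transpositions. Each transposition of two arguments of $D$ flips the sign, via the same $2m'+1$ adjacent-swap count used in that proof. Hence the total sign is $(-1)^m = sgn\;\sigma$. Substituting back gives
$$D(v_1, \dots, v_n) = \sum_{\sigma \in S_n} (sgn\;\sigma) \prod_{i=1}^n a_{i\sigma(i)},$$
which is the Leibniz formula, that is, $det(A)$.

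Alternatively, and more in the spirit of the section, I would phrase steps one to three through the exterior algebra. An alternating multilinear map $V^n \to \kk$ factors through $\bigwedge^n V$ by the universal property. By Proposition \ref{prop:basis}, $dim(\bigwedge^n V) = \binom{n}{n} = 1$, with basis $e_1 \wedge \cdots \wedge e_n$. So $D$ corresponds to a linear functional on a one-dimensional space, and is fixed by its value $1$ on $e_1 \wedge \cdots \wedge e_n$. The computation of $v_1 \wedge \cdots \wedge v_n$ as $det(A)\, e_1 \wedge \cdots \wedge e_n$, generalizing Example \ref{wedgedet}, then finishes the argument.

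The main obstacle is the sign step: $D(e_{\sigma(1)}, \dots, e_{\sigma(n)}) = sgn\;\sigma$ must be consistent, i.e. independent of how $\sigma$ is decomposed into transpositions. I would rely on the well-definedness of $sgn$ implicit in the paper's definition. To be safe I would justify it via the parity of the inversion count, which changes by exactly one under an adjacent transposition.
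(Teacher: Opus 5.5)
Your proof is correct, but your main argument is not the paper's route. The paper's proof is essentially your ``alternative'' paragraph. It factors an alternating multilinear map through $\varphi\colon V^n\to\bigwedge^n V$ and notes that $\bigwedge^n V$ is one-dimensional with basis $e_1\wedge\cdots\wedge e_n$. The induced linear functional, and hence the map, is then fixed by its normalized value on $e_1\wedge\cdots\wedge e_n$; no formula is ever computed.

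Your primary route instead expands by multilinearity, discards repeated indices, and evaluates on permuted basis tuples, arriving at the Leibniz formula. It is self-contained and gives an explicit answer. It needs neither $\dim\bigwedge^n V=1$ nor a universal property for alternating \emph{multilinear} maps out of $V^n$. The paper uses that multilinear universal property without stating it; it only states, without proof, the algebra-level version for linear maps $V\to A$ with $f(v)^2=0$. The paper's route is shorter, since all sign bookkeeping is absorbed into the fact that $e_1\wedge\cdots\wedge e_n$ spans $\bigwedge^n V$.

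Two small points. First, in the exterior-algebra version you do not need the identity $v_1\wedge\cdots\wedge v_n=\det(A)\,e_1\wedge\cdots\wedge e_n$ to finish. If you do use it, you must compute it directly rather than cite it, because the paper derives that identity \emph{from} this theorem. Second, the well-definedness of $\operatorname{sgn}$ that you flag is not needed for uniqueness itself. Any two functions with the properties are evaluated on $(e_{\sigma(1)},\dots,e_{\sigma(n)})$ by the same chain of adjacent swaps, so they agree whatever decomposition is chosen. Well-definedness only matters for recognizing the common value as the Leibniz formula.
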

\begin{proof}
    Consider the diagram below:
    $$
    \begin{tikzcd}[row sep = 4em, column sep = 4em]
    & V^n \arrow{r}{\varphi} \arrow{dr}{det} & E(V) \arrow{d}{\overline{det}} \\ & & F
    \end{tikzcd}
    $$
    We have changed the setting to appropriately accommodate any alternating determinant function denoted $det$. Now we consider the vector space $V^n$ and field $\kk$ as an algebra. The canonical map is defined explicitly as
    \begin{align*}
        \varphi : V^n &\to \bigwedge^n V\\
        (v_1, \cdots, v_n) &\mapsto v_1 \wedge \cdots \wedge v_n
    \end{align*} i.e. $\varphi$ is an injection into $\bigwedge^nV \subseteq E(V)$, which is linear in each argument. \\
    We stated that there is a unique homomorphism (linear map) $\overline{det} :  \bigwedge^n V \subseteq E(V) \to \kk$ that corresponds to $det$, such that $$f(v_1, \cdots, v_n) = \bar{f}(v_1, \cdots, v_n)$$ Which induces a isomorphism $\theta : Hom(V^n, \kk) \to Hom(\bigwedge^nV, \kk)$. However $\bigwedge^nV$ is 1-dimensional. A linear transformation is uniquely by its mapping of basis elements. In this case, any non-zero vector constitutes a basis. We choose the canonical basis consisting of element $e_1 \wedge \cdots \wedge e_n$ only, and the identity matrix $I \in M_{n \times n}$. $det(I) = 1$, we have equality $\overline{det}(e_1 \wedge \cdots \wedge e_n) = 1$ which characterizes $\overline{det}$. Therefore $$\left|Hom(\bigwedge^nV, \kk)\right| = 1$$ Thus $\overline{det}$ is unique up to a scalar, hence the corresponding $det$ is the unique determinant function.
\end{proof}

Computation in \ref{wedgedet} hinted an intrinsic relationship between wedge product and determinant. This relationship and uniqueness of determinant manifests itself in the following identity:

\begin{corollary}
    $$v_1 \wedge \cdots \wedge v_n = det(A)\,e_1 \wedge \cdots \wedge e_n$$ where $A$ is the matrix $[v_1 \, \cdots \, v_n]$ where each $v_i$ occupies a column.
\end{corollary}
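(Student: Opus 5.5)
I will prove the identity by a direct expansion, in the spirit of Example \ref{wedgedet}, and then compare the resulting coefficient with the Leibniz formula. Write each column vector in the fixed basis as $v_j = \sum_{i=1}^n a_{ij}\, e_i$, so that $a_{ij}$ is the $i$-th coordinate of $v_j$ and $A = [v_1 \, \cdots \, v_n]$. By multilinearity of the wedge product (bilinearity together with associativity), the product expands as
$$v_1 \wedge \cdots \wedge v_n = \sum_{i_1, \ldots, i_n = 1}^{n} a_{i_1 1}\, a_{i_2 2} \cdots a_{i_n n}\; e_{i_1} \wedge \cdots \wedge e_{i_n},$$
a sum over all functions $j \mapsto i_j$ from $\{1,\ldots,n\}$ to itself.

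The next step is to discard the degenerate terms. If two of the indices $i_j$ coincide, Proposition \ref{prop:nil} makes the term vanish. The surviving index tuples are therefore exactly the bijections, and I will write them as $i_j = \sigma(j)$ with $\sigma \in S_n$. For each such term, the proposition on the action of $S_k$ (equivalently, repeated use of the transposition-sign proposition) lets me reorder the factors:
$$e_{\sigma(1)} \wedge \cdots \wedge e_{\sigma(n)} = (sgn\;\sigma)\; e_1 \wedge \cdots \wedge e_n.$$
Collecting terms gives
$$v_1 \wedge \cdots \wedge v_n = \Big(\sum_{\sigma \in S_n} (sgn\;\sigma) \prod_{j=1}^n a_{\sigma(j) j}\Big)\, e_1 \wedge \cdots \wedge e_n.$$

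The main obstacle is identifying this coefficient with $det(A)$. As written, it is the Leibniz formula for the transpose $A^T$: it sums over columns rather than rows. I will handle this by reindexing with $\tau = \sigma^{-1}$. Since $sgn\;\tau = sgn\;\sigma$ and $\prod_j a_{\sigma(j) j} = \prod_i a_{i \tau(i)}$, the coefficient equals $\sum_{\tau} (sgn\;\tau)\prod_i a_{i\tau(i)} = det(A)$. The one fact needed here, that a permutation and its inverse have the same sign, follows from writing $\sigma$ as a product of $m$ transpositions and $\sigma^{-1}$ as the same transpositions in reverse order.

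As a cross-check, I can also argue abstractly. By Proposition \ref{prop:basis}, $\bigwedge^n V$ is one-dimensional, so $v_1 \wedge \cdots \wedge v_n = c(v_1,\ldots,v_n)\, e_1 \wedge \cdots \wedge e_n$ for some scalar function $c$. This $c$ is multilinear and alternating in the columns and satisfies $c(e_1,\ldots,e_n)=1$. Theorem \ref{detunique}, applied to columns (using $det(A)=det(A^T)$, which is exactly the reindexing above), then forces $c = det$.
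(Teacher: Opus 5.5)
Your proof is correct, but your main argument takes a different route from the paper's. The paper argues only abstractly. Since $\bigwedge^n V$ is one-dimensional, it writes $v_1\wedge\cdots\wedge v_n = c\, e_1\wedge\cdots\wedge e_n$, observes that the coefficient is multilinear, alternating and normalized, and lets Theorem \ref{detunique} identify it with the determinant. That is essentially your closing cross-check.

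Your primary argument instead expands $v_1\wedge\cdots\wedge v_n$ by multilinearity and kills the non-injective index tuples with Proposition \ref{prop:nil}. It then sorts the surviving terms with the sign rule and matches the coefficient against the Leibniz formula after the reindexing $\tau=\sigma^{-1}$.

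What your route buys is self-containment. The computation establishes the identity outright, with three dependencies removed:
\begin{itemize}
\item It does not need $e_1\wedge\cdots\wedge e_n\neq 0$, i.e.\ that $\bigwedge^nV$ is exactly one-dimensional. The paper needs this for the coefficient $c$ to be well defined, and its proof of Proposition \ref{prop:basis} only shows spanning.
\item It does not need the universal property, which the paper states without proof.
\item It does not need Theorem \ref{detunique}, which rests on that universal property.
\end{itemize}
It also makes the row/column issue explicit. Theorem \ref{detunique} is phrased for functions multilinear in rows, while here the $v_i$ are columns. The paper's proof passes over this silently; you settle it via $sgn\,\sigma^{-1}=sgn\,\sigma$, i.e.\ $det(A)=det(A^T)$.

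The paper's route, in exchange, is more conceptual. It shows that the coefficient is forced by the structural properties alone (alternating, multilinear, normalized). That is the point of the remark that follows: the determinant is a consequence of the wedge product rather than an input to it.
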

\begin{proof}
    We regard $e_1 \wedge \cdots \wedge e_n$ as the standard basis of  $\bigwedge^nV$. Any $n$-vector $v = v_1 \wedge \cdots \wedge v_n \in \bigwedge^nV$ can be written uniquely as a scalar multiple $v = c \cdot e_1 \wedge \cdots \wedge e_n$ for some $c \in \kk$, for $\bigwedge^nV$ is $1$-dimensional. Given identity induces a map 
    \begin{align*}
        \alpha : \bigwedge^nV &\to \kk\\
        v &\mapsto c
    \end{align*}
    $\alpha$ is obviously multilinear, alternating and we have $\alpha(e_1 \wedge \cdots \wedge e_n) = 1$. By \ref{detunique}, $\alpha$ is the determinant.
\end{proof}

\begin{remark}
    This implies that determinant is a natural consequence of the wedge product, and wedge product is the underlying structure for the determinant.
\end{remark}

\subsubsection{Volume}
Suppose we have a metric on the vector space $V$, therefore the notion of volume is defined. A volume is obtained by multiplying $n$ linearly independent vectors, which prompts us again to consider the $\bigwedge^nV$. We wish to give an intuitive explanation for the relationship between volume and determinant, since we do not care about a rigorous setup of metric on a space and a careful definition of volume in this paper. For visualization, one may consider our explanations on euclidean space $\mathbb{R}^3$. Say we have three vectors, namely $v_1, \cdots, v_n \in V$. We can talk about the parallelotope spanned by these vectors, denote its oriented volume as $vol(v_1, \cdots, v_n)$. Suppose we change one of the vectors, namely $v_i$ with $v_i' = c\,v_1 + x$, where choice of $c \in \kk$ and $x \in V$ are arbitrary. Intuitively, we have $vol(v_1, \cdots, v_i', \cdots v_n) = c \cdot vol(v_1, \cdots, v_i, \cdots v_n) + vol(v_1, \cdots, x, \cdots v_n)$ This should hint at multilinearity of volume. Recall from linear algebra that volume is oriented and its orientation changes when we switch order of any two vector factors, which is linked to sign change in wedge product. Finally, volume of a product of the standart basis, namely $vol(e_1, \cdots, e_n) = 1$. Therefore volume we identify volume with the unique function that shares this properties, the determinant. We summarize this relationship symbolically as $vol = det$.

\pagebreak

\section{Invariant Subspaces and Subalgebras}

It is shown in \cite{zahra} that a (not necessarily commutative) $\kk$-algebra $R$ is a polynomial ring over $\kk$ if and only if it is finitely generated, connected graded with no non-trivial invariant subspaces. Recall that a $\kk$-subspace $W$ of $R$ is called invariant (or Aut-stable) if $f(W)\subseteq W$ for every automorphism $f \in \text{Aut}_{\kk}(R)$. Recall that elements of $\text{Aut}_{\kk}(R)$ are ring isomorphisms which are also ${\kk}$-linear. The subspaces $0$, $\kk$, and $R$ are called trivial subspaces. Although there are automorphisms that respect (preserve) grading such that $f(A_i) \subseteq A_i$, here we consider all automorphisms; regardless if they respect gradings, or not. First, we summarize the classification of automorphisms of exterior algebra given in \cite{djokovic}.

\subsection{Automorphisms}

For the Grassmann algebra generated by $n$ variables, when $n$ is finite, the automorphisms are completely understood and studied in the groundbreaking work of D. {\v{Z}}. Djokovi{\'c}.

We first give an abstract description of automorphism group of exterior algebra $A$. To do that, we need to develop some algebraic notions. For the discussion below, we consider $dim(V) = n$ finite and $char(\kk) \neq 2$. Let $\Gamma$ be the group of automorphisms of $A$.

\begin{definition}
    An algebra is called a \textbf{local algebra} if it has only one maximal ideal.
\end{definition}

\begin{proposition}\label{local}
    A is a local algebra with the unique maximal ideal $M = \sum_{i=1}^n A_i$
\end{proposition}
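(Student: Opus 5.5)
Since $M=\bigoplus_{i\ge 1}A_i$, I will show three things: $M$ is a two-sided ideal, every element outside $M$ is a unit, and hence every proper ideal lies in $M$. Together these make $M$ the unique maximal ideal.

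First, $M$ is a two-sided ideal. It is a subspace as a direct sum of subspaces. Write $r\in A$ as $r=r_0+r'$ with $r_0\in A_0=\kk$ and $r'\in M$, and take $x\in M$. The grading $A_k\wedge A_l\subseteq A_{k+l}$ puts $r'\wedge x$ and $x\wedge r'$ in $M$, since their grade is at least $2$. Scalar multiples $r_0x$ and $xr_0$ also stay in $M$. Moreover $A/M\cong A_0=\kk$ is a field, so $M$ is maximal. Also $M\neq A$, because $1\notin M$.

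Second, every $x\in M$ is nilpotent. Write $x=\sum_{i\ge1}x_i$ with $x_i\in A_i$. Expanding $x^{n+1}$ by bilinearity gives a sum of products of $n+1$ homogeneous factors, each of grade at least $1$. Every such term has grade at least $n+1>n$, so by the remark following Proposition \ref{prop:multinil} it is $0$. Hence $x^{n+1}=0$.

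Third, every element outside $M$ is invertible. Take $a=c+x$ with $c\in\kk^\times$ and $x\in M$, and write $a=c(1+c^{-1}x)$ with $y=c^{-1}x\in M$ nilpotent. Then the geometric series gives the inverse:
$$(1+y)\Bigl(\sum_{j=0}^{n}(-1)^j y^j\Bigr)=1+(-1)^n y^{n+1}=1,$$
and the same holds on the other side because the powers of $y$ commute with $y$. So $a^{-1}=c^{-1}\sum_{j=0}^n(-1)^jy^j$.

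Now let $I$ be any proper (left, right, or two-sided) ideal. It contains no unit, so by the third step $I\subseteq M$. Therefore any maximal ideal equals $M$, and $M$ is the unique maximal ideal.

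The only step needing care is the nilpotency of an inhomogeneous $x\in M$. Here one must use the grading bound, not the square-zero property, since $x^2\neq 0$ can occur; for example $(e_1\wedge e_2+e_3\wedge e_4)^2\neq 0$. Everything else is routine.
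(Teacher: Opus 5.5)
Your proof is correct and follows the same outline as the paper's: $A/M\cong\kk$ gives maximality of $M$, and uniqueness comes from the fact that the non-units of $A$ are exactly the elements of $M$. The difference is that you actually prove this last fact: every $x\in M$ satisfies $x^{n+1}=0$ by the grading, so $c+x$ with $c\neq 0$ is inverted by a finite geometric series, and hence every proper ideal lies in $M$. The paper only asserts it, citing the general claim that every maximal ideal of any ring equals its set of non-units. That claim is false in general (e.g.\ $2\mathbb{Z}\subset\mathbb{Z}$) and amounts to assuming the ring is already local.
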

\begin{proof}
    Trivially, have that $A/M \cong A_0 = \kk$ as a result of the first isomorphism theorem. It is well known that since $A/M$ is a field, there is no ideal $J$ of $A$ such that $M \subset J \subset R$, so $M$ is maximal. This is the unique maximal ideal, since in any ring every maximal ideal equals the set of non-units in the ring. Equivalently, every maximal ideal is contained in the complement of the set of units, which in this case is $\kk$. Thus the set of non-units is precisely $M$, and every maximal ideal must be $M$. Thus it is the unique maximal ideal of $A$.
\end{proof}

\begin{notation}
    We define ideals $$M_k := \sum_{i = k}^n A_i$$ Note that $M^0 = A$.
\end{notation}

This notation is consistent with the algebra structure, since the multiplication respects grading. Indeed, for all $i$, we have $A_i \wedge A_i \subseteq A_{2i}$. Thus the product of elements from $M^k$ lies again in $M^{2k} \subseteq M^k$, so the $M^k$ form a multiplicative filtration, which justifies the notation.
(A filtration is a nested subsequence of subsutructures compatible with multiplication.)

The ideals $M^i$ form a descending chain:
$$A = M^0 \supset M^1 \supset \cdots M^n \supset 0$$

\begin{definition}
    We say that an automorphism $\sigma$ \textbf{stabilizes} a chain $S_0 \subset S_1 \subset S_2 \subset \cdots$,\\ if $\sigma(S_i) = S_i$ for $0 \le i \le n$.
\end{definition}
It follows that inclusion relation remains true under a chain stabilizing automorphism.

\begin{proposition}    
    Any automorphism $\sigma \in \Gamma$ stabilizes the finite chain of ideals $M_i$ given above. 
\end{proposition}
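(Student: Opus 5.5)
The plan is to show first that $\sigma(M)=M$ for the maximal ideal $M=M^1$, and then to identify each $M^k$ intrinsically as the $k$-th power of $M$. Concretely, we will prove
$$M^k = M^{(k)} := \underbrace{M\wedge M\wedge\cdots\wedge M}_{k}$$
(the ideal spanned by all products of $k$ elements of $M$), for $1\le k\le n$. Once this is in hand, an algebra automorphism $\sigma$ satisfies $\sigma(M\wedge\cdots\wedge M)=\sigma(M)\wedge\cdots\wedge\sigma(M)=M\wedge\cdots\wedge M$, so $\sigma(M^k)=M^k$ for every $k$. The endpoints $M^0=A$ and $0$ are trivially stable.

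The first step is $\sigma(M)=M$. By Proposition \ref{local}, $M$ is the unique maximal ideal of $A$. An automorphism maps maximal ideals to maximal ideals, since it induces an order-preserving bijection on two-sided ideals. Hence $\sigma(M)$ is a maximal ideal and must equal $M$. An alternative, more elementary route uses units: $M$ is exactly the set of non-units (an element $c+m$ with $c\neq 0$ is invertible because $m$ is nilpotent, with $m^{\wedge(n+1)}=0$ by the Remark after Proposition \ref{prop:multinil}). Automorphisms preserve non-units, so $\sigma(M)=M$.

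The second step is the identification $M^k=M^{(k)}$. For the inclusion $M^{(k)}\subseteq M^k$, a product of $k$ elements of $M$ expands by bilinearity into products of homogeneous pieces of degrees $\ge 1$ each. Since multiplication adds degrees, every such piece lies in degrees $\ge k$. For the converse, a spanning element of $M^k$ is a basis element $e_I$ with $|I|=j\ge k$ (Proposition \ref{prop:basis}). We write
$$e_I=e_{i_1}\wedge\cdots\wedge e_{i_{k-1}}\wedge(e_{i_k}\wedge\cdots\wedge e_{i_j}),$$
which is a product of $k$ elements of $M$.

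The main obstacle is making the first step rigorous, namely that automorphisms permute two-sided ideals and preserve maximality. This is routine, since $\sigma^{-1}$ pulls back ideals, but it should be stated. The unit-based argument is the one I would actually write, because it depends only on nilpotency of elements of $M$, and that follows from grading. In that argument I need $(c+m)^{-1}=c^{-1}\sum_{t\ge0}(-c^{-1}m)^t$, where the sum is finite.
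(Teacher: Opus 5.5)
Your proof is correct and follows the paper's route: $\sigma(M)=M$ because $M$ is the unique maximal ideal, and then $\sigma(M^k)=\sigma(M)^k=M^k$. Your explicit check that $\sum_{i\ge k}A_i$ coincides with the $k$-th ideal power of $M$ supplies a step the paper uses only tacitly, since its notation conflates $M_k$ with $M^k$.
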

\begin{proof}
    Since $M$ is the unique maximal ideal of $A$ by \ref{local}, an automorphism $\sigma \in \Gamma$ must map it to itself, i.e. $\sigma(M) = M$. It follows that $$\sigma(M^k) = \sigma(M)^k = M^k$$ 
\end{proof}

\begin{definition}
    Let $\sigma \in \Gamma$. For some $\;0 \le i \le n$, we have $\sigma(M^i) = M^i$ and $\sigma(M^{i+1}) = M^{i+1}$. Then $\sigma$ naturally induces a linear map $\sigma_i$ on the quotient, defined as:
    \begin{align*}
        \sigma_i : M^i/M^{i+1} &\to M^i/M^{i+1}\\
        x + M^{i+1} &\mapsto \sigma(x) + M^{i+1}
    \end{align*}    
\end{definition}

\begin{proposition}
    Let $\sigma \in \Gamma$ and $\sigma_i$ is defined as in the previous definition. $\sigma_i$ is an automorphism of vector space $M^i/M^{i+1}$
\end{proposition}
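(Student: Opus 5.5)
We need to show that $\sigma_i$ is a well-defined, linear, bijective map of $M^i/M^{i+1}$ to itself. The whole argument rests on the preceding proposition, $\sigma(M^k)=M^k$ for every $k$, together with the fact that $\sigma$ is a $\kk$-linear bijection of $A$. Here $M^k$ is read as the ideal $M_k=\sum_{j\ge k}A_j$, as in the notation above.

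\emph{Well-definedness and linearity.} Suppose $x+M^{i+1}=x'+M^{i+1}$ with $x,x'\in M^i$. Then $x-x'\in M^{i+1}$, so
$$\sigma(x)-\sigma(x')=\sigma(x-x')\in\sigma(M^{i+1})=M^{i+1}.$$
Also $\sigma(x)\in\sigma(M^i)=M^i$, so the class $\sigma(x)+M^{i+1}$ really lies in $M^i/M^{i+1}$. Hence $\sigma_i$ is well defined. Linearity is inherited directly from the $\kk$-linearity of $\sigma$ and the linear structure of the quotient. Equivalently, $\sigma|_{M^i}$ composed with the projection $M^i\to M^i/M^{i+1}$ is a linear map whose kernel contains $M^{i+1}$, so it factors through the quotient.

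\emph{Bijectivity.} The inverse automorphism $\sigma^{-1}\in\Gamma$ also stabilizes the chain, since $\sigma(M^k)=M^k$ gives $\sigma^{-1}(M^k)=M^k$. It therefore induces a map $(\sigma^{-1})_i$ on $M^i/M^{i+1}$ by the same construction. The construction is functorial: for $\tau,\rho\in\Gamma$ we have $(\tau\rho)_i=\tau_i\rho_i$, because
$$\tau\rho(x)+M^{i+1}=\tau_i\bigl(\rho(x)+M^{i+1}\bigr),$$
and $(\mathrm{id})_i=\mathrm{id}$. It follows that $\sigma_i(\sigma^{-1})_i=(\sigma^{-1})_i\sigma_i=\mathrm{id}$, so $\sigma_i$ is invertible.

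An alternative for bijectivity, valid because $\dim A=2^n$ is finite, is to show injectivity alone. If $\sigma(x)\in M^{i+1}$, then $x\in\sigma^{-1}(M^{i+1})=M^{i+1}$, so the kernel of $\sigma_i$ is zero. Injectivity then implies surjectivity by dimension.

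There is no genuine obstacle here. The only point needing care is that $\sigma$ maps $M^{i+1}$ \emph{onto} itself, not merely into it, so that $\sigma^{-1}$ also preserves it. This is exactly the equality $\sigma(M^k)=M^k$ established in the preceding proposition.
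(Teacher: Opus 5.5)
Your proposal is correct and follows the same route as the paper: well-definedness comes from $\sigma(M^{i+1}) = M^{i+1}$, and linearity and bijectivity come from $\sigma$. The one difference is that you actually justify bijectivity, via $(\sigma^{-1})_i$ and functoriality or via injectivity plus finite dimension, whereas the paper only asserts it is ``inherited from $\sigma$''. Your observation that this step needs $\sigma(M^{i+1}) = M^{i+1}$ as an equality, not just an inclusion, is exactly what the paper leaves implicit.
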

\begin{proof}
    $\sigma_i$ is well-defined, which means that the map is unambiguous on cosets. In other words, the map does not depend on the choice of representative of the coset: Say $x, y$ are in the same coset, such that $x-y \in M^{i+1}$. Then, $$\sigma_i(x) - \sigma_i(y) = \sigma_i(x-y) \in \sigma_i(M^{i+1}) = 0 + M^{i+1} $$
    $\sigma_i$ inherits linearity and bijectivity from $\sigma$.
\end{proof}

There is a canonical map $\varphi : A_i \to M^i/M^{i+1}$ so we can identify $\sigma_i$ as an automorphism of $A_i$.

Note that $A_i$ is a vector space and the set of automorphisms of $A_i$ is $GL(A_i)$. The map $f_i : \Gamma \to GL(A_i)$ defined by $f_i(\sigma) = \sigma_i$ is clearly an homomorphism. For any $\sigma \in \Gamma$, $\sigma_0$ is the identity such that $\sigma_0(c) = c\cdot\sigma_0(1) = c \cdot 1 = c$ for $c \in A_0$. It is well-known that $f_1$ is surjective. This is due to the following fact: Since $A_1 = V$ is the genarating space of algebra $A$, any linear change in a basis of $V$ determines a unique algebra automorphism of the whole exterior algebra A. We denote $GL(A_1)$ with $G$, onward. In particular, any $\tau \in G$ can be extended multiplicatively to an automorphism $g(\tau) \in \Gamma$, such that $$g(\tau)(v_1 \wedge \cdots \wedge v_m) := \tau(v_1) \wedge \cdots \wedge \tau(v_m) $$

\begin{definition}
    Let $\pi : A \to B$. A \textbf{section} $g : B \to A$ is a map such that $\pi \circ g = id_B$
\end{definition}

$g : G \to \Gamma$ implicitly defined above is a section, and a homomorphism, such that $f_1 \circ g = id_{GL(A_1)}$. Indeed, check this composition for $\tau \in G$:
$$
f_1(g(\tau)) = (g(\tau))_1 = \tau
$$
Since $g(\tau)$ restricts $\tau$ on $A_1$

Let $N = ker(f_1)$, then we have short exact sequence $\mathcal{S}$:
$$
\begin{tikzcd}[row sep = small, column sep = small]
1 \arrow{r} & N \arrow{r}{\phi} & \Gamma \arrow[r, shift left=0.5ex, "f_1"] & \arrow[l, shift left=0.5ex, "g"] G \arrow{r}{t} & 1
\end{tikzcd}
$$

\begin{definition}
A sequence 
$$
\begin{tikzcd}[row sep = small, column sep = small]
\cdots \arrow{r} & X_1 \arrow{r}{f_1} & X_2 \arrow{r}{f_2} & X_3 \arrow{r} & \cdots
\end{tikzcd}
$$
is called \textbf{exact} at $X_2$ if $im(f_1) = ker(f_2)$.
\end{definition}
\begin{definition}
A sequence of groups is:
\begin{itemize}
    \item \textbf{exact} if it is exact at every position 
    \item \textbf{short} if it begins and ends with 1 (the trivial group), instead of continuing indefinitely like a long exact sequence
\end{itemize}
\end{definition}

The following proposition presents the following conditions for a sequence to be short and exact. In particular, we give the proposition on the sequence $\mathcal{S}$ to highlight ...

The proposition below provides the conditions for a sequence to be short exact. We state it here, in particular for the sequence $\mathcal{S}$ to serve our purpose.
\begin{proposition}
    A sequence $\mathcal{S}$ is short exact, if $\phi$ is injective, $f_1$ is surjective and $im(\phi) = ker(f_1)$
\end{proposition}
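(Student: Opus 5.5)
The statement is essentially the definition of exactness applied position by position, so the proof will be a direct verification at each of the three interior positions $N$, $\Gamma$ and $G$ of $\mathcal{S}$. By the definition of an exact sequence of groups, $\mathcal{S}$ is exact exactly when it is exact at $N$, at $\Gamma$ and at $G$. The two outer maps are forced, since $1$ is the trivial group: the map $1 \to N$ has image $\{1\}$, and the map $t : G \to 1$ has kernel all of $G$.

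First, exactness at $N$ means $\operatorname{im}(1 \to N) = \ker(\phi)$, that is $\ker(\phi) = \{1\}$. For a group homomorphism this is equivalent to $\phi$ being injective. Second, exactness at $\Gamma$ is literally the hypothesis $\operatorname{im}(\phi) = \ker(f_1)$. Third, exactness at $G$ means $\operatorname{im}(f_1) = \ker(t) = G$, which is exactly the surjectivity of $f_1$. Combining these three, the hypotheses give exactness at every position, and the sequence begins and ends with $1$, so $\mathcal{S}$ is short exact. I would then note that in our setting each hypothesis actually holds. The map $\phi$ is the inclusion of $N = \ker(f_1)$ into $\Gamma$, so it is injective and $\operatorname{im}(\phi) = N = \ker(f_1)$. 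Surjectivity of $f_1$ follows from the section $g$, since $f_1 \circ g = \mathrm{id}_G$.

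The only step needing any care is the equivalence between trivial kernel and injectivity for the homomorphism $\phi$. I would recall the standard one-line argument: if $\phi(a) = \phi(b)$, then $\phi(ab^{-1}) = 1$, so $ab^{-1} \in \ker\phi$. Beyond this there is no real obstacle; the main point is to keep the bookkeeping of which map is checked at which position straight, and to remember that the arrow labelled $t$ into the trivial group contributes the condition $\ker t = G$.
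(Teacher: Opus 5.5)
Your proposal is correct and is essentially the paper's proof: you verify exactness at $N$ (injectivity of $\phi$ gives $\ker(\phi)=\{1\}=im(1\to N)$), at $\Gamma$ (the hypothesis $im(\phi)=\ker(f_1)$, which the paper phrases as $im(\phi)=N=\ker(f_1)$), and at $G$ (surjectivity of $f_1$ gives $im(f_1)=G=\ker(t)$). Your closing remark that the hypotheses actually hold here, with $\phi$ the inclusion of $N$ and surjectivity of $f_1$ coming from $f_1\circ g=\mathrm{id}_G$, matches the paper's final sentence and its earlier discussion of the section $g$.
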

\begin{proof}
    In a typical short sequence like $\mathcal{S}$, the map $1 \to N$ has image ${id_N}$ (the identity element of $N$). 
    \begin{itemize}
        \item If $\phi$ is injective, then $ker(\phi) = {id_N} = im(1 \to N)$ so we have exactness at $N$.
        \item The trivial homomorphism $t$ takes $G$ to $1$, so all of $G$ is the kernel for $t$. If $f_1$ is surjective, then we also have $Im(f_1) = G = ker(t)$, thus exactness at $G$.
        \item Finally, if $\phi$ injective and $N = ker(f_1)$, so $im(\phi) = N = ker(f_1)$ so $\mathcal{S}$ is exact in the middle, at $\Gamma$.
    \end{itemize}
    We define $\phi$ an injection and $f_1$ is surjective, so $\mathcal{S}$ is indeed a short exact sequence.
\end{proof}

\begin{definition}
    Let $N, Q$ be groups, and let $\varphi : Q \to Aut(N)$ be a group homomorphism describing an action of $Q$ on $N$, namely conjugation. A \textbf{semidirect product} of $N$ and $Q$ with respect to $\varphi$ is the group $G = N \rtimes_\varphi Q$ which has the multiplying set $N \times Q$ (ordered pairs $(n, q)$) equipped with multiplication 
    $$
    (n_1, q_1) \cdot (n_2, q_2) = (n_1 \cdot \varphi(q_1)(n_2), q_1q_2)
    $$ 
    The identity is $(1_N, 1_Q)$ and the inverses are in form: $(n, q)^{-1} = (\varphi(q^{-1})(n^{-1}), q^{-1})$. Copy of $N$ is normal in the semidirect product.
\end{definition}

\begin{theorem}
    The short exact sequence $\mathcal{S}$ that splits (there exists a section $g$)yields the semidirect product decomposition $$\Gamma = N \rtimes G$$    
\end{theorem}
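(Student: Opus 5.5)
We prove the standard splitting lemma for the sequence $\mathcal{S}$, using that $N=\ker(f_1)$ is normal in $\Gamma$ and that $g:G\to\Gamma$ is a homomorphism with $f_1\circ g=id_G$, both established above. First I will make the conjugation action explicit. Define $\varphi:G\to Aut(N)$ by $\varphi(\tau)(n)=g(\tau)\,n\,g(\tau)^{-1}$. This lands in $N$ because $N$ is a kernel and hence normal: indeed $f_1(g(\tau)ng(\tau)^{-1})=\tau\cdot id\cdot\tau^{-1}=id$. It is a homomorphism into $Aut(N)$ because $g$ is a homomorphism and conjugation is multiplicative. This yields the group $N\rtimes_\varphi G$ of the preceding definition.

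Next I will define $\Psi:N\rtimes_\varphi G\to\Gamma$ by $\Psi(n,\tau)=n\,g(\tau)$ and check that it is a homomorphism:
$$\Psi\big((n_1,\tau_1)(n_2,\tau_2)\big)=n_1g(\tau_1)n_2g(\tau_1)^{-1}g(\tau_1\tau_2)=n_1g(\tau_1)n_2g(\tau_2)=\Psi(n_1,\tau_1)\Psi(n_2,\tau_2).$$
For injectivity, suppose $n\,g(\tau)=id$. Applying $f_1$ and using $f_1(n)=id$ gives $\tau=f_1(g(\tau))=id$, so $g(\tau)=id$ and then $n=id$.

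Surjectivity is the step that actually uses the section, and it is the only real content of the argument. Given $\sigma\in\Gamma$, put $\tau=f_1(\sigma)\in G$ and $n=\sigma\,g(\tau)^{-1}$. Then $f_1(n)=\tau\tau^{-1}=id$, so $n\in N$, and $\Psi(n,\tau)=\sigma$. Hence $\Psi$ is an isomorphism and $\Gamma\cong N\rtimes G$, with $N$ as the normal factor and $g(G)$ as a complement. I expect no serious obstacle here. The only care needed is to insist that $g$ is a genuine group homomorphism, which the paper asserts from the multiplicative extension $g(\tau)(v_1\wedge\cdots\wedge v_m)=\tau(v_1)\wedge\cdots\wedge\tau(v_m)$, rather than merely a set-theoretic section.
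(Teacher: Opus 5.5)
Your proof is correct and follows the same route as the paper. The paper's lemma, that every $\sigma\in\Gamma$ decomposes uniquely as $n\cdot g(\tau)$ with $\tau=f_1(\sigma)$ and $n=\sigma g(\tau)^{-1}$, is exactly your surjectivity and injectivity of $\Psi$, and its conjugation computation $n_1g(\tau_1)n_2g(\tau_2)=n_1\bigl(g(\tau_1)n_2g(\tau_1)^{-1}\bigr)g(\tau_1)g(\tau_2)$ is your homomorphism check, so packaging it as an explicit isomorphism $\Psi\colon N\rtimes_\varphi G\to\Gamma$ is just a cleaner presentation of the same argument.
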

\begin{proof}
    There is a section $g : G \to \Gamma$ which is a group homomorphism so $\Gamma$ is split, meaning we can rebuild $\Gamma$ from $N$ and $G$. We demonstrate this as follows:

    \begin{lemma}
        With section $g$, every $\sigma \in \Gamma$ can be written uniquely as $$\sigma = n \cdot g(\tau)$$
        with $n \in N, \tau \in G$.
    \end{lemma}
    \begin{proof}
        Take $\sigma \in \Gamma$. Let $\tau := f_1(\sigma) \in G$
        Since $f_1(g(\tau)) = \tau$,\hspace{1ex} $\sigma$ and $g(\tau)$ have the same projection to $G$. Therefore 
        $$
        f_1(\sigma \cdot g(\tau)^{-1}) = f_1(\sigma) \cdot f_1(g(\tau))^{-1} = \tau \cdot \tau^{-1} = 1 
        $$ 
        So $\sigma \cdot g(\tau)^{-1} \in N = ker(f_1)$ Let $n := \sigma \cdot g(\tau)^{-1} $ be that element. We rewrite $\sigma$ as $$\sigma = n \cdot g(\tau), \quad n \in N, \tau \in G$$
        It remains to establish uniqueness of this decomposition. Suppose $$n_1 \cdot g(\tau_1) = n_2 \cdot g(\tau_2)$$
        Applying $f_1$ to both sides, we get $\tau_1 = \tau_2$.
        Cancelling $g(\tau_1)$ from the right, we get: $n_1 = n_2$
    \end{proof}
    
    Finally, the multiplication in semidirect product decomposition of $\Gamma$ is given as follows:
    Let $(n_1, \tau_1), (n_2, \tau_2) \in N \rtimes G$
    \begin{align*}
        (n_1, \tau_1), (n_2, \tau_2) 
        &= (n_1 \;g(\tau_1)) \;(n_2 \;g(\tau_2)) \\
        &= n_1 \;g(\tau_1) \;n_2 \;g(\tau_2) \\
        &= n_1 \;g(\tau_1) \;n_2 \;(g(\tau_1)^{-1} \;g(\tau_1)) \;g(\tau_2) \\
        &= n_1 \;(g(\tau_1) \;n_2 \;g(\tau_1)^{-1}) \;g(\tau_1) \;g(\tau_2)  \\
        &= n_1 \;n_2' \;g(\tau_1) \;g(\tau_2) \in N \rtimes GL(A_1) = \Gamma
    \end{align*}
    ($(g(\tau_1) \;n_2 \;g(\tau_2)^{-1}) \in N$ Since $N$ is normal)
\end{proof}

\begin{remark}
    We identify decomposition of a $\sigma = n \cdot g(\tau)$ with the ordered pair $(n, \tau)$    
\end{remark}

We shall give a short description of the contents of subgroups $G$ and $N$.
\paragraph{G.}
    Recall that $G = GL(A_1)$. According to the semidirect product decomposition $\Gamma$, there lies a copy of $GL(A_1)$ inside $\Gamma$. This is justified, as we showed how every linear automorphism of $A_1 = V$ extends uniquely to an automorphism of the whole exterior algebra. Concretely, these are \emph{grade preserving} automorphisms - they send generators in $A_1$ to linear combinations of generators, and extend multiplicatively to wedge products.

    \begin{example}
         Let $V$ be a vector space with a basis $\{x_1, \cdots, x_n\}$. Let $f : V \to V$
         $$
            f(x_i) = \begin{cases}
                x_2 & \text{if } i = 1\\
                x_3 & \text{if } i = 2\\
                x_1 & \text{if } i = 3\\
                x_i & \text{otherwise}
            \end{cases}
        $$
        This is a grade preserving automorphism in $G$. It is easy to check inverse is well-defined.
    \end{example}

    \paragraph{N.}
    $N$ is the kernel of surjective map $f_1: \Gamma \to G$. This means that $N$ consists of automorphisms of exterior algebra that act trivially on $A_1$, but can \emph{twist} higher degree components.

    The automorphisms of $N$ are of the form
    $$
    x \mapsto x + \text{(higher order terms)}
    $$
    In other words, elements of $N$ send each generator $x \in A_1$ to $x + u(x)$ where $u(x) \in \sum_{i=2}^n A_i$.

    We wish to exemplify a derivation in $N$, but to do that first we must first solidify the concept of inner derivations.
    \begin{definition}
    Let $A$ be a ring, $D_a : A \to A$ be a linear map. Suppose this map satisfies the Leibniz Rule, such that: $D(ab) = D(a)b + aD(b)$. We call $D_a$ a derivation of $A$.
    \end{definition} 
    
    Note that this is a general and purely algebraic definition. Although shares similar properties, it is not to be treated as the differentiation from calculus. It is easily seen that a commutator in $A_{comm}$ is a derivation as defined above. We define the derivative of $a$ on $A$ as follows:
    \begin{align*}
    D_a : A &\to A \\
    x &\mapsto ax - xa
    \end{align*}
    
    It is clear that a map $D_a$ defined this way is equal to the commutator $[a, -]$, such that $D_a(x) = [a, x]$. 
    
    \begin{lemma}\label{lem:dernil}
    If $a, b \in a_{odd}$, then $D_a D_b = 0$ 
    \end{lemma}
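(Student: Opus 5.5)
The plan is to compute $D_b$ explicitly on homogeneous elements using the graded commutativity of Proposition \ref{prop:parity}. Once that is done, the composition $D_aD_b$ vanishes for purely parity reasons. Throughout I read $a_{odd}$ in the statement as $A_{odd}$, and I write products in $A$ as juxtaposition ($xy$ for $x\wedge y$), as the paper does in the definition of $D_a$.

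\textbf{Step 1: reduce to homogeneous elements.} The assignment $(a,x)\mapsto D_a(x)=ax-xa$ is bilinear, so $D_aD_b$ depends bilinearly on $(a,b)$ and linearly on $x$. It therefore suffices to show $D_aD_b(x)=0$ when $a\in A_k$ and $b\in A_l$ with $k,l$ odd and $x\in A_m$ homogeneous. Here I use that every element of $A_{odd}$ is a sum of homogeneous odd-grade pieces, and every element of $A$ is a sum of homogeneous pieces by the $\mathbb{N}$-grading.

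\textbf{Step 2: compute $D_b$ on homogeneous $x$.} Proposition \ref{prop:parity} gives $xb=(-1)^{lm}bx=(-1)^m bx$, since $l$ is odd. So if $m$ is even, $D_b(x)=bx-bx=0$. If $m$ is odd, $D_b(x)=bx+bx=2\,bx$, which lies in $A_{l+m}$, and $l+m$ is even. In both cases $D_b(x)$ is a homogeneous element of even grade, possibly zero.

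\textbf{Step 3: apply $D_a$.} By the same computation as in Step 2, now with the odd element $a$ in place of $b$, $D_a$ annihilates every homogeneous even-grade element: for $y\in A_{2j}$ we have $ya=(-1)^{k\cdot 2j}ay=ay$. Hence $D_a(D_b(x))=0$ for every homogeneous $x$. By the linearity in Step 1 this extends to all $x\in A$ and all $a,b\in A_{odd}$, which gives $D_aD_b=0$.

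The argument is essentially routine. The only point needing care is Step 1: the sign rule $x\wedge y=(-1)^{|x||y|}y\wedge x$ from Proposition \ref{prop:parity} holds only for homogeneous elements, so we must not apply it directly to a mixed-grade $a$ or $x$. That is exactly why we reduce to homogeneous components first. Incidentally, the computation also shows that $D_b$ is twice left multiplication by $b$ on the odd part and zero on the even part, which is likely what the paper uses later.
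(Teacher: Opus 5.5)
Your proof is correct and follows the paper's argument. Like the paper, you use Proposition \ref{prop:parity} to show that $D_b$ vanishes on even-grade homogeneous $x$ and equals $2bx$ (of even grade) on odd-grade $x$, and then note that $D_a$ kills even-grade elements. Your explicit reduction to homogeneous components of $a$ and $b$ via bilinearity is a small gain in rigor, since the paper's proof writes $|b|$ as though $b$ were homogeneous.
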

    \begin{proof}
        Let $x \in A_k$ be homogeneous. If $k$ is even, then by \ref{prop:parity}
        $$
        D_b(x) = bx - xb = 0
        $$
        If $k$ is odd, then
        $$
        D_b(x) = bx - xb = bx - (-1)^{|b||x|}bx = 2bx
        $$
        which has even degree. But $D_a$ vanishes on even degree inputs when $a$ is odd, hence $$
        D_a(D_b(x)) = 0
        $$
        Thus $D_aD_b = 0$ for all $x$.
    \end{proof}

    In particular, it follows from lemma $\ref{lem:dernil}$ that $D_a^2 = 0$ for $a \in A_{odd}$.

    \begin{definition}
        Exponential of a derivation $D$, $exp(D)$ is defined formally as:
        $$
        exp(D) = \sum_{k=0}^\infty \frac{D^k}{k!}
        $$
    \end{definition}
    In our case, due to nilpotency $D_a^2 = 0$, the exponential truncates to $exp(D_a) = 1 + D_a$.

    \begin{proposition}
        $exp(D_a)$ is an automorphism of the exterior algebra.
    \end{proposition}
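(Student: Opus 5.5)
The plan is to show directly that $\exp(D_a)=1+D_a$ is a unital, bijective, multiplicative linear map, for $a\in A_{odd}$, as in Lemma \ref{lem:dernil}. Linearity is clear, since $D_a$ is linear. It fixes the identity because $D_a(1)=a-a=0$. Bijectivity follows from $D_a^2=0$ (the consequence of Lemma \ref{lem:dernil} noted above): $(1+D_a)(1-D_a)=1-D_a^2=1$, and likewise in the other order. So $1-D_a$ is a two-sided inverse.

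Next I record two facts about $D_a$.

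\emph{Leibniz rule.} For all $x,y$ we have $a(xy)-(xy)a=(ax-xa)y+x(ay-ya)$. This is the routine identity $D_a(xy)=D_a(x)y+xD_a(y)$.

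\emph{Explicit formula.} Write $x=x_{even}+x_{odd}$. Each homogeneous component of $a$ has odd grade. By Proposition \ref{prop:parity}, $a$ commutes with every even homogeneous element and anticommutes with every odd homogeneous element. Extending by linearity, as in the proof of Lemma \ref{lem:dernil}, gives
\[
D_a(x)=2\,a\,x_{odd}.
\]

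Multiplicativity then reduces to a single identity:
\[
(1+D_a)(x)\,(1+D_a)(y)=xy+D_a(x)y+xD_a(y)+D_a(x)D_a(y)=(1+D_a)(xy)+D_a(x)D_a(y).
\]
So it suffices to prove $D_a(x)D_a(y)=0$ for all $x,y$. I expect this to be the main obstacle. It is \emph{not} the statement $D_aD_b=0$ of Lemma \ref{lem:dernil}: that lemma concerns composition of maps, whereas here we need the vanishing of a product of two outputs.

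By the explicit formula, $D_a(x)D_a(y)=4\,a\,x_{odd}\,a\,y_{odd}$. Since $a$ and $x_{odd}$ are both sums of odd homogeneous elements, Proposition \ref{prop:parity} gives $x_{odd}\,a=-a\,x_{odd}$. Hence the product equals $-4\,a^2\,x_{odd}\,y_{odd}$, and it remains to check that $a^2=0$.

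To see this, write $a=\sum_i a_i$ with each $a_i$ homogeneous of odd grade. Then
\[
a^2=\sum_i a_i^2+\sum_{i<j}\bigl(a_ia_j+a_ja_i\bigr).
\]
Every term vanishes by Proposition \ref{prop:parity}. For the diagonal terms, $a_i^2=-a_i^2$, so $a_i^2=0$ because $\operatorname{char}\kk\neq 2$. For the cross terms, $a_ia_j=-a_ja_i$ since both grades are odd.

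Therefore $1+D_a$ is a bijective unital algebra homomorphism, hence an automorphism of $A$. Moreover, when $a\in M^2$, the formula $D_a(v)=2av$ shows that $1+D_a$ sends each $v\in A_1$ to $v$ plus higher-order terms, so it lies in $N$.
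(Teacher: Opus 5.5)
Your proof is correct and has the same structure as the paper's: the Leibniz rule reduces multiplicativity of $1+D_a$ to the vanishing of the cross term $D_a(x)D_a(y)$. The difference is in how that step is justified. The paper only cites Lemma~\ref{lem:dernil}, which, as you rightly note, concerns the composite $D_aD_b$ and does not by itself give $D_a(x)D_a(y)=0$; your computation $D_a(x)D_a(y)=-4a^2x_{odd}y_{odd}=0$ actually closes the gap, and you also check unitality and bijectivity (via the inverse $1-D_a$), which the paper omits.
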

    \begin{proof}
        Explicitly, for $x, y \in A$,

        \begin{align*} 
            (1+D_a)(xy)
            &= xy + D_a(xy)\\
            &= xy + D_a(x)y + xD_a(y)\\
            &= xy + D_a(x)y + xD_a(y) + D_a(x)D_a(y) & \text{by lemma \ref{lem:dernil}, $D_a(x)D_a(y) = 0$}\\
            &= (1+D_a)(x) (1+D_a)(y)
        \end{align*}
        Hence $exp(D_a) = 1 + D_a \in \Gamma$
    \end{proof}

    Ultimately, for $a \in A_{odd}$, 
    $$
    (1 + D_a)(x) = x + ax - xa \quad \forall x \in A
    $$
    This implies that $exp(D_a)$ fixes $A_1$ in $M_1/M_2$.
    Therefore it is clear that $exp(D_a)$ in $N$.\\

    Inside $N$, there is a nice abelian subgroup $N_1$ generated by exponentials of inner derivations $1 + D_a$ for $a \in A_{odd}$ These are called the \emph{inner automorphisms} of the exterior algebra. A reader interested in another semidirect product decomposion, which is $\Gamma = N_1 \rtimes A_0$ with much more explicitly defined subgroups $N_1$ and $A_0$ that isolates the inner automorphism structure may refer to full classification result in \cite{djokovic}.

\pagebreak

\subsection{Classification of Invariant Subalgebras}
In continuation, two questions naturally arise: 
\begin{enumerate}
    \item Are there any proper (non-trivial) invariant subspaces?
    \item If there are, -for classification- what are they?
\end{enumerate}
First question is answered positively with the next theorem.

\begin{remark}
    We recall the following identity of wedge product:
    $$A_i A_j \subseteq A_{i + j}$$
\end{remark}

It is proven in \cite{zahra}, that commutative polynomial rings do not have any nontrivial invariant subalgebras. However, we have:

\begin{lemma} \label{lem:comminv}
Let $A$ be {a} connected graded noncommutative algebra. Then the algebra generated by  commutators in $A$ is an invariant subalgebra, which is nontrivial. We {denote} this algebra by  $A_{comm}$.
\end{lemma}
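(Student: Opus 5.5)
The plan splits into invariance and nontriviality.

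\emph{Invariance.} Let $A_{comm}$ be the subalgebra generated by all commutators $[x,y]=xy-yx$ with $x,y\in A$. For $f\in \text{Aut}_{\kk}(A)$ we have
\[
f([x,y]) = f(x)f(y)-f(y)f(x) = [f(x),f(y)],
\]
which is again a commutator. So $f$ maps the generating set of $A_{comm}$ into itself. A general element of $A_{comm}$ is a $\kk$-linear combination of products of commutators, and $f$ is $\kk$-linear and multiplicative. Hence $f(A_{comm})\subseteq A_{comm}$ for every automorphism $f$. This part is routine.

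\emph{Nontriviality.} We must show $A_{comm}\neq 0$, $A_{comm}\neq \kk$ and $A_{comm}\neq A$. Here I take the algebra generated by commutators to be the non-unital subalgebra, i.e.\ sums of products of commutators, so that it does not automatically contain $1$.

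\begin{itemize}
\item $A_{comm}\neq 0$: since $A$ is noncommutative there exist $x,y$ with $[x,y]\neq 0$.
\item $A_{comm}\neq \kk$ and $A_{comm}\subseteq M=\bigoplus_{i\ge1}A_i$: write $x=x_0+x'$ and $y=y_0+y'$ with $x_0,y_0\in A_0=\kk$ and $x',y'\in M$. Scalars are central, so $[x,y]=[x',y']\in M$, because $M$ is an ideal (grading). Products of elements of $M$ stay in $M$, so $A_{comm}\subseteq M$. In particular $1\notin A_{comm}$, hence $A_{comm}\neq \kk$ and $A_{comm}\neq A$. Together with $A_{comm}\neq 0$, this settles the trivial subspaces $0,\kk,A$.
\end{itemize}

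If the intended convention instead includes $\kk$, i.e.\ $A_{comm}=\kk+(\text{the above})$, then nontriviality needs the stronger fact that $A_{comm}\neq A$. For that I would show $A_{comm}\cap A_1=0$. Each homogeneous component of $[x',y']$ lies in $\bigoplus_{i\ge2}A_i$, since $A_iA_j\subseteq A_{i+j}$ with $i,j\ge1$. So $A_{comm}\subseteq \kk\oplus\bigoplus_{i\ge2}A_i$. This misses $A_1$, which is nonzero since otherwise $A=\kk$ would be commutative. Thus $A_{comm}\neq A$; also $A_{comm}\neq\kk$ because $[x,y]\neq0$ lies in $M$.

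The only delicate point is this bookkeeping: commutators land in $M$, in fact in $M^2=\bigoplus_{i\ge2}A_i$, by the grading. The reduction to $x',y'$ via centrality of $A_0=\kk$ handles inhomogeneous $x,y$ cleanly, so I expect no real obstacle. For the exterior algebra specifically one can exhibit $[e_1,e_2]=2e_1\wedge e_2\neq 0$, using $\mathrm{char}\,\kk\neq 2$ and $n\ge 2$.
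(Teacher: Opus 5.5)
Your invariance argument and your fallback nontriviality argument follow the paper's proof. The paper also uses $f([a,b])=[f(a),f(b)]$. It also observes that, because $A_0=\kk$ is central, every commutator lies in $A_2+A_3+\cdots$, so $A_{comm}\subseteq \kk+A_2+A_3+\cdots$. The paper defines $A_{comm}$ as the span of products of elements of $G\cup\{1\}$, i.e.\ the unital convention. Your primary non-unital argument via $1\notin A_{comm}$ is valid and needs no hypothesis on $A_1$, but it concerns a different object. For the paper's $A_{comm}$, which always contains $1$, it is your fallback that carries the proof.

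That fallback contains one false step: ``$A_1$ is nonzero since otherwise $A=\kk$ would be commutative.'' A connected graded algebra can have $A_1=0$ and still be noncommutative. For example, take the free algebra $\kk\langle x,y\rangle$ with $\deg x=\deg y=2$. So noncommutativity alone does not give $A_1\neq 0$. The paper sidesteps this by explicitly assuming $A_1\neq 0$, which holds for the exterior algebra since $A_1=V$.

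To prove the statement in full generality, let $d\ge 1$ be the least degree with $A_d\neq 0$; it exists because $A\neq\kk$. Then $\bigoplus_{i\ge1}A_i=\bigoplus_{i\ge d}A_i$. Every commutator lies in $\bigl(\bigoplus_{i\ge d}A_i\bigr)^2\subseteq\bigoplus_{i\ge 2d}A_i$, hence $A_{comm}\subseteq\kk\oplus\bigoplus_{i\ge 2d}A_i$. Since $d<2d$, this subspace does not contain $A_d\neq0$. With that repair the proof is complete.
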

\begin{proof}
  Let $G : = \{ [a, b]: = ab - ba \vert a, b \in A\}$, i.e. the set of commutators in $A$. Since $A$ is not commutative, there exist $a, b \in A$ such that $ab \neq ba$. Therefore, $ab - ba \neq 0 \in G$. Now let $A_{comm}$ be the algebra generated by $G$, that means that  intersection of all subalgebras of $A$ that contains $G$ or equivalently $A_{comm}$ is the set of all $\kk$-linear combinations of multiplication of elements of $G \cup \{ 1\}$.  Clearly $A_{comm}$ is invariant subalgebra, indeed for every $ f \in Aut_{\kk}(A)$, we have $ f([a, b]) = [f(a) , f(b)]$. 
It is enough to see that 
$A_{comm}$ is not trivial. Recall that $A = A_0 \oplus A_1 \oplus A_2 \oplus \dots$ and we suppose that $A_1$ is non zero. Then if  
$a, b \in A$, we write 
$a = a_0 + a_1 + \cdots a_n$, for some $n \geq 0$ and $a_i \in A_i$. Similarly $ b = b_0 + b_1 + \cdots b_m $, for some $m \geq 0$,  where $b_i \in A_i$. Then since $A_0 = \kk$, we have  
$ab - ba \in A_{2} + A_3 + A_4 + .... $.
That means $ G \subseteq A_{2} + A_3 + A_4 + \cdots  $.  Therefore $A_{comm}  \subseteq K+ 
A_2 + A_3 + \cdots < A$ 
\end{proof}

Consequently, due to existence of at least one non-trivial invariant subspace, exterior algebra is not a polynomial ring over $\kk$. This brings us to the second question: what are other invariant subalgebras of exterior algebras? In this section, we expibit this classification for the exterior algebra.

\begin{lemma}
    We charatcerize the center of $A$ as follows: 
    $ Z(A):= \left\{ a \in A \; \middle | \; ab = ba,\, \forall b \in A \right\} = \bigoplus _{i = 0} ^{\lfloor n/2 \rfloor} A_{2i}  +  A_n$
\end{lemma}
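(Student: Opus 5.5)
We prove the two inclusions separately, working throughout in the homogeneous basis $B_k$ of Proposition \ref{prop:basis} and using the sign rule of Proposition \ref{prop:parity}. Since the center is a subspace and $A$ is generated as an algebra by $A_1=V$, an element $a$ is central if and only if $a e_j = e_j a$ for every basis vector $e_j$, $1\le j\le n$. We fix this reduction first.

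\emph{Inclusion $\supseteq$.} Let $a\in A_{2i}$ and $b\in A_l$ be homogeneous. Proposition \ref{prop:parity} gives $ab=(-1)^{2il}ba=ba$, and bilinearity extends this to all $b\in A$. So every even component is central. For $a\in A_n$ and $b\in A_l$ with $l\ge 1$, the product $ab$ has grade $n+l>n$, so $ab=0=ba$ (the remark after Proposition \ref{prop:multinil}). For $l=0$ the scalar $b$ commutes trivially. Hence $A_n\subseteq Z(A)$.

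\emph{Inclusion $\subseteq$.} Write an arbitrary $a=\sum_k a_k$ with $a_k\in A_k$. Since the even part is already central, $a$ is central exactly when $a_{odd}=\sum_{k \text{ odd}}a_k$ is central. Take odd $k$ and homogeneous $x\in A_k$. By Proposition \ref{prop:parity}, $xe_j-e_jx = 2\,x\wedge e_j$. Grade comparison separates the components, so $a_{odd}$ is central if and only if $x\wedge e_j=0$ for every odd $k$, taking $x=a_k$, and every $j$ (this uses $\mathrm{char}\,\kk\neq 2$). The main step is then the following claim: if $x\in A_k$ with $k<n$ satisfies $x\wedge e_j=0$ for all $j$, then $x=0$.

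To prove the claim, expand $x=\sum_I c_I e_I$ over $k$-multi-indices. Fix $I$ with $c_I\neq 0$. Since $k<n$, some $j\notin I$ exists. In $x\wedge e_j$, the terms $e_{I'}\wedge e_j$ with $j\in I'$ vanish by Proposition \ref{prop:multinil}. The remaining terms are $\pm e_{I'\cup\{j\}}$, and these are distinct basis elements of $B_{k+1}$ for distinct $I'$, because $I'$ is recovered as $(I'\cup\{j\})\setminus\{j\}$. Linear independence then forces $c_I=0$, a contradiction. So every odd component of a central element vanishes unless $k=n$. That is, odd $n$ contributes exactly $A_n$, and even $n$ contributes nothing new. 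This gives $Z(A)=\bigoplus_i A_{2i}+A_n$.

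I expect the main obstacle to be this independence bookkeeping in the claim, together with noting where characteristic $\neq 2$ is needed.
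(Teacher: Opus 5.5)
Your proof is correct and follows essentially the same route as the paper: you reduce to the odd components of grade $<n$, use $xe_j-e_jx=2\,x\wedge e_j$ for odd homogeneous $x$, and conclude using the fact that a nonzero element of grade $k<n$ has nonzero product with some basis vector $e_j$. The paper argues by contradiction on the lowest nonzero odd component and only asserts that such an $e_j$ exists; you prove it by the basis-expansion argument, state where $\mathrm{char}\,\kk\neq 2$ is used, and treat scalars separately when showing $A_n\subseteq Z(A)$ (the paper's ``$Ta=0$ for every $a\in A$'' fails for $a=1$), so your version fills the small gaps in the paper's proof.
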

\begin{proof}
    Clearly every element in $ A_{2j}$ is in the center and if $T \in A_n$, then $T a = 0 = aT$, for every $a \in A$. This shows that  $\bigoplus _{i = 0} ^{\lfloor n/2 \rfloor} A_{2i}  +  A_n \subseteq Z(A)$. On the other hand, if 
    $x \in Z(A)$, then 
    we write $ x = x_1 \oplus x_2$, where $x_1 \in \bigoplus _{i = 0} ^{\lfloor n/2 \rfloor} A_{2i} \bigoplus A_n$ and $x_2 = 0$ or  $x_2 \in A \setminus \bigoplus _{i = 0} ^{\lfloor n/2 \rfloor} A_{2i} \bigoplus A_n $. Sicne $ xa = ax 
    $ and $ x_1 a = a x_1$, for every $a \in A$. We see that $ x_2 \in Z(A)$. We need to show that $x_2 = 0$. If not, then we write $ x_2 = \sum _{i = 0} ^{ \lfloor n/2 \rfloor - 1 } a_{2i+ 1} $.
    Let $j$ be the minimum $i$ such that $a_i \neq 0$. 
    Since $ j < n$ exists $e_k$ that $ a_j  e_k \neq 0 $. 
    Since for  every  $a_i$, we have $ a_i e_k = 0 = e_k a_i$ or $ a_i e_k = (-1) e_k a_i $, we conclude that 
    $ x_2 e_k = (-1) e_k x_2$ which is in contradiction with this fact that $ x_2 \in Z(A)$. So $x_2 = 0$. 
\end{proof}

\begin{remark}
    We underline that when $n$ is even, $Z(A) = A_{even}$
\end{remark}

\begin{proposition}
For any algebra $A$, the center of $A$ is an invariant subalgebra.
\end{proposition}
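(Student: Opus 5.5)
The plan has two parts: first show that $Z(A)$ is a subalgebra, then show that every $f \in \text{Aut}_{\kk}(A)$ maps $Z(A)$ into itself. Both parts use only the definition $Z(A) = \{a \in A \mid ab = ba,\ \forall b \in A\}$ together with the facts that automorphisms are bijective, multiplicative and $\kk$-linear. So the argument is independent of the structure of $A$, and in particular does not need the explicit description of $Z(A)$ for the exterior algebra given in the preceding lemma.

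\textbf{Step 1: $Z(A)$ is a subalgebra.}
\begin{itemize}
\item $1 \in Z(A)$, and $\kk \cdot 1 \subseteq Z(A)$, because multiplication is bilinear.
\item If $a, a' \in Z(A)$ and $c, c' \in \kk$, then for every $b \in A$ bilinearity gives $(ca + c'a')b = c\,ab + c'\,a'b = c\,ba + c'\,ba' = b(ca + c'a')$. Hence $Z(A)$ is a subspace.
\item For closure under multiplication, associativity gives $(aa')b = a(a'b) = a(ba') = (ab)a' = (ba)a' = b(aa')$.
\end{itemize}

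\textbf{Step 2: invariance.} Fix $f \in \text{Aut}_{\kk}(A)$ and $a \in Z(A)$. Let $b \in A$ be arbitrary. Since $f$ is surjective, we can write $b = f(c)$ for some $c \in A$. Then
$$f(a)\,b = f(a)f(c) = f(ac) = f(ca) = f(c)f(a) = b\,f(a).$$
So $f(a) \in Z(A)$, which gives $f(Z(A)) \subseteq Z(A)$. Applying the same argument to $f^{-1}$ yields the reverse inclusion, hence $f(Z(A)) = Z(A)$. Only the inclusion is required by the definition of invariance, but the equality comes for free.

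The only point needing any care is the use of surjectivity in Step 2. A general algebra endomorphism that is not onto need not map the center into the center, so it matters that we range over automorphisms. Everything else is routine bookkeeping. As a closing remark, combining this proposition with the preceding lemma shows that $\bigoplus_{i} A_{2i} + A_n$ is an invariant subalgebra of the exterior algebra. It is nontrivial for $n \ge 2$, since it contains $A_2 \neq 0$ and misses $A_1$ when $n \geq 2$.
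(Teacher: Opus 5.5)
Your proof is correct and follows the paper's argument. The paper also writes $b = f(f^{-1}(b))$ (your $b = f(c)$) and pushes the commutation $z f^{-1}(b) = f^{-1}(b) z$ through $f$; your Step 1 additionally checks, using associativity and the unit, that $Z(A)$ is a subalgebra, which the paper leaves implicit.
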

\begin{proof}
Indeed, if $f \in Aut_\kk (A)$ and $z \in Z(A)$, then, then for every $b \in A$:
$$ f(z) b = 
f(z) f (f^{- 1} (b) ) = f (z f^{- 1} (b) ) = f ( f^{- 1} (b) z  ) = b f(z) $$
concluding $f(z) \in Z(A)$. 
\end{proof}

\begin{lemma} \label{lem:commeven}
    $A_{comm},$ defined in Lemma \ref{lem:comminv}, is equal to
    $A_{even} = \bigoplus _{i = 0} ^{[n/2]} A_{2i}$. 
\end{lemma}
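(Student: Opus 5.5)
We prove the two inclusions $A_{comm} \subseteq A_{even}$ and $A_{even} \subseteq A_{comm}$ separately. Throughout we use that $\operatorname{char}(\kk) \neq 2$ (indeed $0$). As in the proof of Lemma \ref{lem:comminv}, $A_{comm}$ is the span of all products of elements of $G \cup \{1\}$, so in particular $1 \in A_{comm}$.

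For $A_{comm} \subseteq A_{even}$, first compute commutators of homogeneous elements. If $a \in A_i$ and $b \in A_j$, then Proposition \ref{prop:parity} gives
$$[a,b] = ab - ba = \bigl(1 - (-1)^{ij}\bigr)\, a \wedge b .$$
This vanishes unless $i$ and $j$ are both odd. In that case it equals $2\, a\wedge b \in A_{i+j}$, and $i+j$ is even. Since the commutator is bilinear, we may write arbitrary $a,b \in A$ as sums of homogeneous components, and it follows that every element of $G$ lies in $A_{even}$. Next, $A_{even}$ is a subalgebra containing $1$: it contains $A_0 = \kk$, and $A_{2i} A_{2j} \subseteq A_{2i+2j}$ by the identity $A_iA_j \subseteq A_{i+j}$. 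Because $A_{comm}$ is the smallest subalgebra containing $G$, we get $A_{comm} \subseteq A_{even}$.

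For $A_{even} \subseteq A_{comm}$, it suffices by Proposition \ref{prop:basis} to show that each basis element of $A_{even}$ lies in $A_{comm}$. These basis elements are $1$ and $e_I = e_{i_1} \wedge \cdots \wedge e_{i_{2k}}$ with $k \ge 1$ and $i_1 < \cdots < i_{2k}$. We already know $1 \in A_{comm}$. The computation above, applied to $e_{i}, e_{j} \in A_1$, gives $[e_i, e_j] = 2\, e_i \wedge e_j$. Since $2$ is invertible, $e_i \wedge e_j = \tfrac12 [e_i,e_j] \in A_{comm}$. By associativity we can group the factors of $e_I$ into consecutive pairs:
$$e_I = \bigl(e_{i_1}\wedge e_{i_2}\bigr)\wedge \cdots \wedge \bigl(e_{i_{2k-1}}\wedge e_{i_{2k}}\bigr) = 2^{-k}\,[e_{i_1},e_{i_2}]\cdots[e_{i_{2k-1}},e_{i_{2k}}].$$
This is a scalar multiple of a product of elements of $G$, so $e_I \in A_{comm}$. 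Taking linear combinations yields $A_{even} \subseteq A_{comm}$.

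Neither step is deep. The only point needing care is the sign bookkeeping in the homogeneous commutator formula, in particular checking that odd--even and even--even commutators vanish. That formula is what makes the first inclusion work and also produces the factor $2$ that must be inverted in the second. Note also that when $n$ is odd, $A_n$ is not contained in $A_{even}$. This is consistent with the statement, since an element of $A_n$ is never a product of commutators, each of which has even grade.
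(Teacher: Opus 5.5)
Your proof is correct. The inclusion $A_{comm}\subseteq A_{even}$ is the same argument as in the paper. Homogeneous commutators vanish unless both degrees are odd, and then equal $2ab$, which has even degree. You are more careful than the paper in one respect: you state explicitly that $A_{even}$ is a subalgebra containing $1$, which is what justifies passing from $G\subseteq A_{even}$ to $A_{comm}\subseteq A_{even}$.

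For $A_{even}\subseteq A_{comm}$ the two routes differ. The paper writes each even-degree monomial as a \emph{single} commutator, $x_{i_1}\wedge\cdots\wedge x_{i_m}=\tfrac12[x_{i_1}\wedge\cdots\wedge x_{i_{m-1}},\,x_{i_m}]$, pairing an odd-degree element with a generator. You instead factor it as $2^{-k}$ times a \emph{product} of $k$ degree-two commutators $[e_i,e_j]$. Each version gives a different extra fact. The paper's identity shows that $A_{even}=\kk\oplus\mathrm{span}(G)$, so no products of commutators are needed at all. Your factorization uses closure of $A_{comm}$ under multiplication, and in exchange shows that $A_{even}$ is generated as an algebra by the commutators $[e_i,e_j]$ of the generators alone. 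Both arguments need $\tfrac12\in\kk$ at exactly the same point.
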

\begin{proof}   
    Let us show the inclusion $A_{even} \subseteq A_{comm}$. Let $m \geq 2 $ be even, then $ x_{i_1} \wedge \cdots \wedge x_{i_m} = 
    1/2 [ x_{i_1} \wedge \cdots \wedge x_{i_{m -1}}, x_{i_m} ]$. This implies that $A_m \subseteq A_{comm}$ and consequently $A_{even} \subseteq A_{comm}$. For the reverse inclusion, if $ a, b \in A$, then 
    by writing $a, b $ in form of $a = \sum _{i = 0} ^n a_i $ and  $b = \sum _{i = 0} ^n b_i $, where $a_i, b_i \in A_i$, we see that $a_i b_ j = b_j a_i$, whenever $ i $ or $j$ is 
    even. Therefore, $ [a_i, b_j] = \sum
    _{i + j}2 a_i a_j$, where $ i+ j$ is even.  
\end{proof}


\begin{corollary}\label{cor:eveninv}
    By \ref{lem:comminv} and equality of $A_{even}$ to $A_{comm}$ established in \ref{lem:commeven}, $A_{even}$ is a nontrivial invariant subalgebra.
\end{corollary}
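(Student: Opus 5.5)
The statement follows by combining Lemma \ref{lem:comminv} with Lemma \ref{lem:commeven}. I will first check that the exterior algebra $A$ satisfies the hypotheses of Lemma \ref{lem:comminv}. It is connected graded with $A_0=\kk$, and it is noncommutative as soon as $n\ge 2$, since $e_1\wedge e_2 = -e_2\wedge e_1 \neq e_2\wedge e_1$ in characteristic $\neq 2$. Lemma \ref{lem:comminv} then says that the subalgebra $A_{comm}$ generated by commutators is invariant under every $f\in Aut_\kk(A)$. The reason is that $f([a,b])=[f(a),f(b)]$ is again a commutator, and $f$ is multiplicative and linear, so it maps products and linear combinations of commutators back into $A_{comm}$. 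Lemma \ref{lem:commeven} identifies $A_{comm}$ with $A_{even}$, so $A_{even}$ is invariant.

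Next I will confirm non-triviality directly, so as not to lean only on the containment $A_{comm}\subseteq \kk+A_2+\cdots$ from Lemma \ref{lem:comminv}. The trivial subspaces are $0$, $\kk$ and $A$.
\begin{itemize}
\item $A_{even}\neq A$ because $A_1=V\neq 0$ and $A_1\cap A_{even}=0$ by the direct sum decomposition.
\item $A_{even}\neq \kk$ (and $\neq 0$) because $A_2$ has dimension $\binom{n}{2}\ge 1$ when $n\ge 2$. For instance, $e_1\wedge e_2 = \tfrac12[e_1,e_2]$ is a nonzero element outside $\kk$.
\end{itemize}
It is also worth checking that $A_{even}$ is closed under $\wedge$, since $A_{2i}A_{2j}\subseteq A_{2i+2j}$. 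This is automatic from its description as a generated subalgebra, but it confirms that it is a subalgebra.

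The main point needing care is the degenerate cases. For $n=1$ the algebra $A=\kk\oplus \kk e_1$ is commutative, so $A_{even}=\kk$ is trivial and the corollary fails. For $n=0$ the statement is vacuous. So I will state the implicit hypothesis $n\ge 2$. Apart from that, the only step with real content is Lemma \ref{lem:commeven}, and its reverse inclusion should be read correctly: for homogeneous $a_i,b_j$ the commutator $[a_i,b_j]$ vanishes unless both $i$ and $j$ are odd, in which case it equals $2a_ib_j\in A_{i+j}$ with $i+j$ even. So every commutator lies in $A_{even}$, and hence the algebra they generate does too. With that reading, the corollary is immediate.
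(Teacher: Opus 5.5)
Your proposal is correct and follows the paper's argument. You take invariance of $A_{comm}$ from Lemma \ref{lem:comminv} and transfer it to $A_{even}$ via Lemma \ref{lem:commeven}; your direct nontriviality check and the explicit hypothesis $n\ge 2$ are sound additions, since the paper leaves that hypothesis implicit in the noncommutativity assumption of Lemma \ref{lem:comminv}. One small slip: for $n=0$ the claim is not vacuous but false, because $A_{even}=A=\kk$ is trivial, though your standing assumption $n\ge 2$ already excludes this case.
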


Let us provide another type of invariant subalgebra.
\begin{proposition}
    Let $A$ be the Exterior algebra generated by $ \{x_1, \cdots, x_n\}$. The following direct sum is an invariant subalgebra:
        $B_{i} := A_0 \oplus (\bigoplus_{j=i}^n A_j),$ where $ i \geq 1.$
\end{proposition}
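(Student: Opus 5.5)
The plan is to observe that $B_i = \kk \oplus M^i$, where $M^i = \sum_{j\ge i} A_j$ is the ideal introduced in the discussion of automorphisms. The proof then splits into two parts: showing $B_i$ is a subalgebra, and showing it is stable under every $f \in \text{Aut}_{\kk}(A)$.

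\emph{Subalgebra.} $B_i$ is clearly a subspace containing $1$. For closure under multiplication, take $c + x$ and $c' + x'$ with $c,c' \in \kk$ and $x,x' \in M^i$. The product is
$$cc' + cx' + c'x + x \wedge x'.$$
The middle terms lie in $M^i$. The last term lies in $M^{2i} \subseteq M^i$, since $A_j A_l \subseteq A_{j+l}$ and $i \ge 1$; components of grade exceeding $n$ vanish.

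\emph{Invariance.} The key step is $f(M^i) = M^i$ for every automorphism $f$. This was stated in the paper as the proposition that any $\sigma \in \Gamma$ stabilizes the chain $M^k$. That proof used $\sigma(M)=M$, which holds because $M$ is the unique maximal ideal (Proposition \ref{local}), together with the identification of $M^k$ with the $k$-th power of $M$. I would make this identification explicit: $M^k$ (the power) is spanned by products of $k$ elements of $M = \bigoplus_{j\ge1}A_j$, hence lies in $\sum_{j\ge k}A_j$. Conversely, each basis element $e_{I}$ with $|I| \ge k$ is a product of $k$ factors from $M$: take $k-1$ single generators and one wedge of the remaining ones. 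So the two meanings of $M^k$ agree. Then $f(M^k) = f(M)^k = M^k$, because $f$ is a ring automorphism mapping products of $k$ elements of $M$ to products of $k$ elements of $f(M)=M$. Finally $f(1)=1$ and $\kk$-linearity give $f(\kk)=\kk$, so $f(B_i) = \kk + f(M^i) = B_i$.

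The main obstacle is the justification that $f(M)=M$. The paper's proof of Proposition \ref{local} is somewhat loose, so I would argue directly instead. Every element of $M$ is nilpotent, since $x^{n+1}\in M^{n+1}=0$. Every element $c+x$ with $c \neq 0$ and $x \in M$ is a unit, with inverse $c^{-1}\sum_k (-c^{-1}x)^k$, a finite sum. Hence $M$ is exactly the set of nilpotent elements of $A$, a set that any ring automorphism preserves. This bypasses uniqueness of maximal ideals entirely.

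Note that for $i=1$ we get $B_1 = A$, which is trivial. Nontriviality for $2 \le i \le n$ is immediate, since then $B_i$ omits $A_1 \ne 0$ and contains $A_n \ne 0$. This is only a side remark, since the statement asks for invariance and the subalgebra property.
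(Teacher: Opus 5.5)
Your proof is correct and follows essentially the paper's argument. The paper gets $f(x_k)\in M$ for each generator because $f(x_k)\wedge f(x_k)=f(x_k\wedge x_k)=0$ forces the constant term to vanish, and then obtains $f(A_i)\subseteq\bigoplus_{j\ge i}A_j$ by writing basis monomials as products of $i$ generators; you instead get $f(M)=M$ for all of $M$ at once by identifying $M$ with the set of nilpotent elements, and use $\bigoplus_{j\ge i}A_j=M^i$ as an ideal power. This is the same mechanism, and your version also checks multiplicative closure explicitly (which the paper leaves implicit) and avoids relying on the loose proof of Proposition \ref{local}.
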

\begin{proof}
    Let $f \in Aut_{\kk}(A)$. Note that $ x_i \wedge x_i = 0$ and $f$ is ring homomorphism, thus, we have that $ 0 = f(0) = f(x_i) \wedge f(x_i)$. This implies $f(x_i) \in \bigoplus_{i = 1} ^{n}  A_i  $. Therefore $ f(A_i) \subseteq \bigoplus_{i = j} ^{n}  A_j $, for every $j \geq 1$. Indeed, from \ref{prop:basis} each $A_i$ has a finite basis in form $ \{ x_{j_1} \wedge \cdots \wedge x_{j_i} \mid \;k\text{-multi-indices } I\} $. Hence $B$  is invariant. 
\end{proof}

With the last proposition, we have now at hand, two types of direct sums of subspaces $A_i \in A$ that serve as invariant subalgebras. However, according to the classification of invariant subalgebras given by Demir and Nazemian~\cite{zahra-konur}, these are not the only possible forms an invariant subalgebra may possess. For completeness, we state the classification result here without proof.

\begin{notation}
    When  $1 \leq j \leq n$, define $P_j := \{\, S \subseteq \{ j, \dots, n \} \mid j \in S \,\}$. 
\end{notation}

\begin{theorem}
A nonzero subalgebra $B \subseteq {A}$ is invariant ($Aut$-stable) if and only if $B$ has one of the following forms:
\begin{itemize}
    \item[(a)] For some even $j > 0$,  
    \[
    B = \kk \oplus \bigoplus_{k \geq 0} A_{j+2k}.
    \]
\medskip
    \item[(b)]  There exist an odd $j$, a set $S \in P_j$, and an even $0 < i \leq j+1$ such that 
    \[
    \{\,s+i \mid s \in S, s+ i \leq n \,\}   \subseteq S
    \]
    and
    \[
    B = \bigoplus_{k \in S} A_{k} \;\oplus\; \bigoplus_{k \geq 0} A_{i+2k}.
    \]
\end{itemize}
\end{theorem}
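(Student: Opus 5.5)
The plan is to reduce everything to the grading and then to a combinatorial problem about which degrees occur. Throughout, $B$ is a nonzero subalgebra; I take subalgebras to be unital, so $\kk \subseteq B$.

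\textbf{Step 1: $B$ is graded.} For $t \in \kk^{\times}$, the scaling $x \mapsto t x$ on $V$ extends via the section $g$ to a grade preserving automorphism $\delta_t \in G \subseteq \Gamma$, acting on $A_k$ by $t^k$. Take $b=\sum_k b_k \in B$ with $b_k \in A_k$. Then $\delta_t(b)=\sum_k t^k b_k \in B$ for $n+1$ distinct values of $t$; these exist because $\kk$ has characteristic zero. Inverting the Vandermonde matrix gives $b_k \in B$ for every $k$. Hence $B=\bigoplus_k (B\cap A_k)$.

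\textbf{Step 2: each graded piece is all or nothing.} Each $B \cap A_k$ is stable under $g(G)=GL(V)$. The exterior power $\bigwedge^k V$ is an irreducible $GL(V)$-module. If I want to avoid representation theory, I can argue directly. The diagonal automorphisms $x_i \mapsto t_i x_i$ separate the basis monomials $e_I$ of Proposition \ref{prop:basis}, so any nonzero invariant subspace contains some $e_I$. Permutations and maps $x_i \mapsto x_i + x_l$ then generate every $e_J$. So $B=\bigoplus_{k\in T}A_k$ for some $T \subseteq \{0,\dots,n\}$ with $0 \in T$. Since $A_pA_q=A_{p+q}$, and this is spanned by products of basis monomials, the subalgebra property means $T$ is closed under addition whenever the sum is at most $n$.

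\textbf{Step 3: constraints from $N$.} This is the main obstacle: extracting from the non-graded automorphisms exactly the closure rules for $T$.
\begin{itemize}
\item First I use $\sigma \in N$ given by $x_1 \mapsto x_1 + x_2x_3x_4$, fixing the other $x_i$. This is an automorphism: the image of $x_1$ squares to zero and anticommutes with every $x_i$, so the universal property applies, and it is invertible. Apply $\sigma$ to a basis monomial of degree $k$ containing $x_1$ but none of $x_2,x_3,x_4$, and project onto $A_{k+2}$ using Step 1. This shows that $k \in T$ with $1 \le k \le n-3$ forces $k+2 \in T$. Near the top degree I will adjust the choice of monomial, or use other generator twists, and handle $k=n-2,n-1$ separately.
\item Next I use the inner automorphisms $1+D_a$ with $a \in A_d$, $d$ odd. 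For odd $x$ they give $x \mapsto x + 2ax$. So odd $k \in T$ forces $k+d \in T$ for every odd $d \le n-k$, whenever $ax \neq 0$ can be arranged. In other words, an odd element of $T$ forces every even degree above it into $T$.
\end{itemize}

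\textbf{Step 4: combinatorics and the converse.}
\begin{itemize}
\item Suppose $T \setminus \{0\}$ has no odd element, and let $j$ be its least element, which is even. Closure under $+2$ gives form (a).
\item Otherwise, let $j$ be the least odd element of $T$ and let $S$ be the set of odd elements of $T$, so $S \in P_j$. Let $i$ be the least positive even element of $T$. By Step 3, $j+1 \in T$, so $i \le j+1$, and the even part of $T$ is $\{i, i+2, \dots\}$. Additive closure gives $S+i \subseteq S$.
\end{itemize}
I must double check the bookkeeping between (b)'s "$S$" and odd elements, since the statement lets $S$ contain even indices too; these are already in the even tail, so this is harmless.

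For the converse, I show that each listed $B$ is invariant. Any $\sigma=n\cdot g(\tau)$ sends $A_k$ into $A_k \oplus$ (terms of degree $k+2m$), and odd elements additionally pick up even terms of higher degree. Checking this requires the description of $N$ from \cite{djokovic}: generators go to $x+u$ with $u$ odd of degree $\ge 3$. This description is the second delicate point, and I would verify it directly from the condition $(x+u)^2=0$. It then suffices to check that $T$ absorbs these degree shifts, which is exactly the stated closure condition.
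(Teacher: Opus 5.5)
\textbf{Verdict: there is a genuine gap, in the converse, and it cannot be closed because the converse as printed is false.} The paper quotes this theorem without proof, so there is no route to compare against. Your Steps 1--3 and the ``only if'' half of Step 4 are sound, apart from one mismatch: form (b) as printed has no $\kk$ summand, while under your unital convention $0\in T$. Step 3 actually proves more than you use. The first bullet makes all of $T\setminus\{0\}$, odd part included, closed under $+2$. Combined with the second bullet ($d=1$), this forces $\{j,j+1,\dots,n\}\subseteq T$ whenever an odd $j$ lies in $T$.

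The gap is the sentence ``it then suffices to check that $T$ absorbs these degree shifts, which is exactly the stated closure condition.'' That is not true. Form (b) only requires $S+i\subseteq S$, whereas absorbing the shift $A_k\to A_{k+2}$ for odd $k$ requires $S+2\subseteq S$. Your own first bullet gives a counterexample:
\begin{itemize}
\item Take $n=7$, $j=3$, $S=\{3,7\}\in P_3$ and $i=4\le j+1$.
\item Then $\{s+4 : s\in S,\ s+4\le 7\}=\{7\}\subseteq S$, so (b) produces $B=A_3\oplus A_4\oplus A_6\oplus A_7$. This is a subalgebra, and remains one after adding $\kk$.
\item The automorphism $x_1\mapsto x_1+x_2x_3x_4$ (other generators fixed) sends $x_1x_5x_6\in A_3$ to $x_1x_5x_6+x_2x_3x_4x_5x_6$, whose $A_5$-component is nonzero.
\end{itemize}
So $B$ is not invariant, the ``if'' direction of the statement fails, and the transcription of the classification appears garbled.

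The tool you propose for the converse is also inaccurate. Elements of $N$ need not send generators to $x+u$ with $u$ odd: the inner automorphisms $1+D_a$ from your Step 3 lie in $N$ and add the even term $2ax$. Likewise, $(x+u)^2=0$ does not force $u$ to be odd; for example $(x_1+x_1x_2)^2=0$.

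The clean upper bound is already available in the paper, since both $M^k=\bigoplus_{l\ge k}A_l$ and $A_{even}=A_{comm}$ are invariant. Hence, for every automorphism $\sigma$:
\begin{itemize}
\item for even $k$, $\sigma(A_k)\subseteq M^k\cap A_{even}=\bigoplus_{m\ge 0}A_{k+2m}$;
\item for odd $k$, $\sigma(A_k)\subseteq M^k$.
\end{itemize}
Matching these with your lower bounds gives the correct list of unital invariant subalgebras:
\begin{itemize}
\item $\kk\oplus\bigoplus_{k\ge 0}A_{i+2k}$ with $i>0$ even, which includes $\kk$ itself for $i>n$;
\item $\kk\oplus\bigoplus_{k\ge j}A_k\oplus\bigoplus_{k\ge 0}A_{i+2k}$ with $j$ odd, $i$ even and $0<i\le j+1$.
\end{itemize}
In other words, form (b) holds only with $S\supseteq\{j,\dots,n\}$ up to even indices. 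This corrected statement is what your argument actually establishes, in both directions.
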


\pagebreak

\printbibliography

@book{browne,
    author = {John Browne},
    title = {Grassmann Algebra Volume 1: Foundations: Exploring extended vector algebra with Mathematica},
    publisher = {Bernard Publishing},
    year = {2012}
}

@book{hungerford1980,
  author    = {Hungerford, Thomas W.},
  title     = {Algebra},
  publisher = {Springer-Verlag},
  year      = {2003},
  series    = {Graduate Texts in Mathematics},
  volume    = {73},
  address   = {New York},
  edition   = {12},
  isbn      = {9780387905181}
}

@online{nardozza2025,
  author       = {Vincenzo Nardozza},
  title        = {Talk 126, European Nonassociative Algebra Seminar},
  year         = {2025},
  month        = jun,
  day          = {30},
  organization = {European Nonassociative Algebra Seminar},
  url          = {https://youtu.be/5TpsZYvqoXo?si=jEi6B_Z-rakgd42h},
  note         = {University of Bari Aldo Moro, Italy},
}

@article{djokovic,
    author = {Dragomir Z. Djokovic},
    title = {Derivations And Automorphisms of Exterior Algebras},
    journal = {Canadian Journal of Math},
    volume = {30},
    number = {6},
    pages = {1336--1344},    
    year = {1978},
}

@article{desmond,
    author = {Desmond Fearnley-Sander},
    title = {Hermann Grassmann and the Creation of Linear Algebra},
    journal = {The American Mathematical Monthly},
    volume = {86},
    number = {10},
    pages = {809--817},
    year = {1979},
}

@unpublished{zahra,
  author = {Hongdi Huang and Zahra Nazemian and Yanhua Wang and James J. Zhang},
  title = {Relative Cancellation},
  note = {Preprint, \url{https://arxiv.org/abs/2503.10083}},
  year = {2025},
}

@unpublished{zahra-konur,
    author = {Mithat Konuralp Demir and Zahra Nazemian},
    title = {Aut-stable subspaces of Grassmann algebras},
    year = {2025},
    url = {https://arxiv.org/pdf/2508.16945}
}

\end{document}